\begin{document}

\title{\textbf{On the Bardina's model in the whole space}}

\author{Luigi C. Berselli \thanks{Dipartimento di Matematica, Universit\`a di Pisa, Via
F. Buonarroti 1/c, I-56127, ITALY, email:~luigi.carlo.berselli@unipi.it,
URL:~http://pagine.dm.unipi.it/berselli} \and Roger Lewandowski\thanks{IRMAR, UMR 6625,
Universit\'e Rennes 1, and Fluminance team, INRIA, Campus Beaulieu, 35042 Rennes cedex
FRANCE; email:~Roger.Lewandowski@univ-rennes1.fr,
URL:~http://perso.univ-rennes1.fr/roger.lewandowski/}} 
\maketitle

\begin{abstract} 
  We consider the Bardina's model for turbulent incompressible flows in the whole space
  with a cut-off frequency of order $\alpha^{-1} >0$. We show that for any $\alpha >0$
  fixed, the model has a unique regular solution defined for all $t \in [0, \infty[$. \end{abstract}
MCS Classification: 35Q30, 35D30, 76D03, 76D05.
\smallskip

Key-words: Navier-Stokes equations, Bardina's model, regular solutions, Helmholz filter.

\section{Introduction} 
The purpose of this paper is the study of the Large Eddy Simulation (LES) Bardina's model
in the whole space
\begin{equation}
  \label{eq:bardina} 
  \left \{\begin{array}{ll} \zoom \p_t \uv + \div\, (\overline{\uv
        \otimes \uv}) - \nu \Delta \uv + \g p = 0 & \quad \hbox{in } \R^+\times\R^3, 
      \\ 
      \div\,\uv= 0 \phantom{\int_0^N} & \quad \hbox{in } \R^+\times\R^3, 
      \\ 
      \uv_{ t = 0} =\overline{
        \uv_0}&\quad \hbox{in } \R^3.
   \end{array} \right.
\end{equation}
In this system, $\uv = \uv (t, \x) = (u_1(t, \x), u_2(t, \x), u_3(t, \x))$ denotes the
filtered velocity of a given turbulent flow, $p = p(t, \x)$ the filtered pressure, $t \ge
0$, $\x \in \R^3$, and $\nu>0$ is the kinematic viscosity.  The filtering is obtained by
bar-operator, which is given by solving, for a given $\alpha>0$, the Helmholtz
equation~\eqref{eq:Helmholtz}
\begin{equation}
  \label{eq:Helmholtz} 
  -\alpha^2 \Delta \overline \psi + \overline \psi = \psi\quad\hbox{in } \R^3.
\end{equation}
This model is called the Navier-Stokes-Bardina-$\alpha$ model (NSEB-$\alpha$ in the
following). Introduced by Bardina, Ferziger, and Reynolds~\cite{BFR80} for weather
forecasts, it is used in many practical applications (see for instance Adams and
Stolz~\cite{AS2001} and Chow, De~{W}ekker, and Snyder~\cite{CW13}). It was studied
mathematically speaking by Ilyin, Lunasin, and Titi~\cite{ILT06} and by Layton with one of
the present authors~\cite{LL03,LL06} in the space-periodic case.  In the periodic setting,
for all $T>0$ it is proved the existence of a unique weak solution $(\uv, p)$, which
satisfies
\begin{equation*}
 \uv \in L^\infty([0, T]; H^1_{per}) \cap L^2([0, T]; H^2_{per}).
\end{equation*} 
In this paper we consider the Cauchy problem in the whole space and new difficulties arise
in the case of $\R^{3}$. To handle the problem, we revisit some extremely classical tools
of potential theory and explicit representation formulas as developed by
Leray~\cite{Ler1934} in his 1934 paper, where the path for modern research using Sobolev
spaces for the Navier-Stokes equations has been paved. We also observe that contrary to
Leray's approach, here the smoothing is not made with a mollification, but with the
solution of the differential problem~\eqref{eq:Helmholtz} and this implies weaker
estimates on the smoothed fields. Moreover, here the whole quantity $\uv \otimes \uv$ is
smoothed/filtered, while in Leray's model the filtering is applied only to the convective
field. The special properties of the Helmholtz filter~\eqref{eq:Helmholtz}, which are very
relevant in practical computations, are proved in Section~\ref{sec:helmholz_filter}.

Throughout the paper, the initial data $\uv_0\in L^2(\R^3)^3$ is given, and satisfies
$\div \, \uv_0 = 0$, so that $\overline{ \uv_0} \in H^2(\R^3)^3$ and $\div\,\overline{
  \uv_0}=0$.  Our goal is to build a unique regular solution, global in time, to the
NSEB-$\alpha$ model, for a given parameter $\alpha >0$.

\smallskip
 
As it was firstly observed in~\cite{LL03}, the key feature of the NSEB-$\alpha$ is the
following energy balance (equality),
\begin{equation}
  \label{eq:energy_estimate_Bardina} 
  \left \{ 
    \begin{array}{l} 
      \zoom {1 \over 2} \left (
        \alpha^2 \int_{\R^3} |\g \uv (t, \x) |^2 d \x + \int_{\R^3} |\uv (t, \x) |^2 d\x \right )
      + \nu \alpha^2 \int_0^t \int_{\R^3} | \Delta \uv (t', \x)|^2 \, d\x \,dt' + 
      \\
      \zoom \nu
      \int_0^t \int_{\R^3} | \g \uv (t', \x) |^2\, d\x dt' = {1 \over 2} \left ( \alpha^2
        \int_{\R^3} |\g \overline {\uv_0} (\x) |^2 d \x + \int_{\R^3} |\overline{\uv_0} (\x) |^2
        d\x \right ),
    \end{array} 
  \right.
\end{equation}
which is satisfied by any solution $(\uv, p)$, belonging to $(L^2(\R^3))^4$ with its
derivatives. This energy balance is the basic building block of our construction. Before
stating our main result, let us precisely state what we mean by ``regular solution''. The
following definition, which is largely inspired by that of regular solution
in~\cite{Ler1934}, will turn out to be well-suited to the NSEB-$\alpha$ model.
\begin{definition}
  \label{def:regular} We say that $(\uv, p)$ is a regular solution of the NSEB-$\alpha$
  \eqref{eq:bardina} over the time interval $ [0, T[$ (eventually $T = + \infty$) if
  \begin{enumerate}[i)]
  \item $\uv, \p_t \uv, \g \uv, D^2 \uv, p, \g p$ are well-defined and continuous for $(t,
    \x) \in \, [0, T[ \times \R^3$, and they satisfy the relations~(\ref{eq:bardina}.i)
    and~(\ref{eq:bardina}.ii) in $\R^3$ for all $t\in\,]0,T[$;
  \item $\forall \, \x \in \R^3$,\quad $\uv(0, \x) = \overline{\uv_0}(\x)$;
  \item $\forall \, \tau < T$,\quad  $\uv \in C([0, \tau]; H^2(\R^3)^3)$.
  \end{enumerate}
\end{definition}
It is worth noting that when $(\uv, p)$ is a regular solution, then the pressure $p$ is
solution of the Poisson equation
\begin{equation}
  \label{eq:pression} 
  \Delta p = - \div[ \div ( \overline{\uv \otimes \uv } )],
\end{equation} 
at any given time $t \in \, ]0, T[$. When $T < \infty$, we say that $(\uv, p)$ becomes
``singular when $t \to T$'' if it is a regular solution over $[0,T[$ and
\begin{equation*} 
  \lim_{\stackrel {t \to T}{t < T} } \| \uv (t, \cdot )\|_{2, 2}= \infty,
\end{equation*}
where $ \| \uv (t, \cdot) \|_{2, 2}$ denotes the $H^2(\R^{3})$ norm of $ \x \mapsto \uv(t,
\x)$ at a given time $t$.

The main result of this paper is the following.
\begin{theorem}
  \label{thm:existence_solution_reg} 
  The NSEB-$\alpha$ model~(\ref{eq:bardina}) has a unique regular solution $ (\uv, p)$
  defined for any $t \in [0, \infty[$, which satisfies the energy
  balance~(\ref{eq:energy_estimate_Bardina}), for all $t>0$, such that 
  \begin{equation*}
    \p_t \uv \in C([0, \infty[; L^2(\R^3)^3), \quad \text{and}\quad  p \in C([0, \infty[;H^4(\R^3)). 
  \end{equation*} 
  Moreover, $\forall \, \tau >0$, $\forall \, m \ge 0$,\quad  $(\uv, p) \in C([\tau, \infty[;
  H^m(\R^3)^3 \times H^m(\R^3))$.
\end{theorem}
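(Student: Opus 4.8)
The plan is to eliminate the pressure with the Leray projector $\mathbb{P}$ onto divergence-free fields and to recast~\eqref{eq:bardina} as the integral (mild) equation
\begin{equation*}
  \uv(t) = e^{\nu t \Delta}\,\overline{\uv_0} - \int_0^t e^{\nu(t-s)\Delta}\,\mathbb{P}\,\div\, A^{-1}\big(\uv(s)\otimes\uv(s)\big)\,ds,
\end{equation*}
where $A = I-\alpha^2\Delta$ so that $\overline{\psi}=A^{-1}\psi$, and $e^{\nu t\Delta}$ is the heat semigroup (equivalently, the explicit Gaussian kernel of Leray's representation). Two structural facts drive everything. First, by Section~\ref{sec:helmholz_filter} the filter $A^{-1}$ gains two derivatives, $\|A^{-1}f\|_{H^{s+2}}\lesssim\|f\|_{H^s}$, while $H^s(\R^3)$ is a Banach algebra for $s\ge2$; hence the nonlinear operator $B(\uv):=\mathbb{P}\,\div\, A^{-1}(\uv\otimes\uv)$ is in fact \emph{smoothing}, with $\|B(\uv)\|_{H^{s+1}}\lesssim\|\uv\otimes\uv\|_{H^{s}}\lesssim\|\uv\|_{H^s}^2$ for $s\ge2$. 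Second, the energy balance~\eqref{eq:energy_estimate_Bardina} supplies the only globally available a priori control.

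First I would prove local existence and uniqueness in $X_T:=C([0,T];H^2(\R^3)^3)$ by a Banach fixed point argument for the map $\Phi(\uv)(t)=e^{\nu t\Delta}\overline{\uv_0}+(\text{the integral above})$. Because $B$ is quadratic and, thanks to the gain of derivatives, maps $H^2$ into $H^3\hookrightarrow H^2$ with $\|B(\uv)\|_{H^2}\lesssim\|\uv\|_{H^2}^2$ and $\|B(\uv)-B(\widetilde\uv)\|_{H^2}\lesssim(\|\uv\|_{H^2}+\|\widetilde\uv\|_{H^2})\|\uv-\widetilde\uv\|_{H^2}$, the integral term carries a factor $T$ and $\Phi$ is a contraction on a ball of $X_T$ for $T$ small depending only on $\|\overline{\uv_0}\|_{H^2}$. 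This yields a unique maximal solution on $[0,T^*[$ with the blow-up alternative: either $T^*=\infty$, or $\|\uv(t)\|_{H^2}\to\infty$ as $t\to T^*$. Note that the nonlinearity is so favourable that the parabolic smoothing of $e^{\nu t\Delta}$ is not even needed here.

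The heart of the proof is to rule out finite-time blow-up, and this is the step I expect to be the main obstacle, precisely because the natural energy controls only $H^1$. I would first justify~\eqref{eq:energy_estimate_Bardina} on $[0,T^*[$ by testing the equation against $A\uv=\uv-\alpha^2\Delta\uv$: self-adjointness of $A^{-1}$ collapses the filtered convective term into the classical skew-symmetric $\langle\div(\uv\otimes\uv),\uv\rangle=0$, the pressure drops by $\div\uv=0$, and the viscous term produces exactly the dissipation in~\eqref{eq:energy_estimate_Bardina}. This gives the uniform bound $\sup_{t}\|\uv(t)\|_{H^1}\le M$, with $M=M(\alpha,\uv_0)$, together with $\uv\in L^2_{loc}(H^2)$. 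Next I would run the $H^2$ estimate: testing the equation in the $H^2$ inner product and discarding the nonpositive viscous contribution $-\nu\|\g\uv\|_{H^2}^2$, the gain of derivatives yields
\begin{equation*}
  \tfrac{d}{dt}\|\uv\|_{H^2}^2 \;\le\; C\,\|B(\uv)\|_{H^2}\,\|\uv\|_{H^2} \;\le\; C\,\|\uv\|_{H^1}\,\|\uv\|_{H^2}^2,
\end{equation*}
where I used $\|B(\uv)\|_{H^2}\lesssim\|\uv\otimes\uv\|_{H^1}\lesssim\|\uv\|_{L^\infty}\|\uv\|_{H^1}\lesssim\|\uv\|_{H^2}\|\uv\|_{H^1}$. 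Since $\|\uv\|_{H^1}\le M$ is already controlled, Gronwall gives $\|\uv(t)\|_{H^2}^2\le\|\overline{\uv_0}\|_{H^2}^2\,e^{CMt}$, finite on every bounded interval; hence $T^*=\infty$. The decisive point is that the two-derivative gain of the Helmholtz filter lets the $H^2$ estimate close \emph{linearly} in $\|\uv\|_{H^2}^2$, so that only the $H^1$ control coming from the energy balance is needed.

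It remains to upgrade the regularity and to settle uniqueness. For $t\ge\tau>0$ I would bootstrap: writing the mild formula from time $\tau/2$ and combining the parabolic smoothing $\|e^{\nu\sigma\Delta}h\|_{H^{m+1}}\lesssim\sigma^{-1/2}\|h\|_{H^m}$ with $\|B(\uv)\|_{H^{m+1}}\lesssim\|\uv\|_{H^m}^2$, an induction on $m$ gives $\uv\in C([\tau,\infty[;H^m)$ for every $m$. The pressure is recovered from~\eqref{eq:pression} as $p=-\sum_{i,j}R_iR_j\,A^{-1}(u_iu_j)$ with $R_k$ the Riesz transforms; since these are bounded on every $H^s$ and $A^{-1}$ gains two derivatives, $\|p\|_{H^{s+2}}\lesssim\|\uv\|_{H^s}^2$, which yields $p\in C([0,\infty[;H^4)$ from $\uv\in C(H^2)$ and $p\in C([\tau,\infty[;H^m)$ for all $m$. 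Then $\p_t\uv=\nu\Delta\uv-B(\uv)\in C([0,\infty[;L^2)$. Finally, for uniqueness I would test the equation for the difference $\mathbf{w}=\uv-\widetilde\uv$ of two regular solutions against $\mathbf{w}$ in $L^2$; writing $\uv\otimes\uv-\widetilde\uv\otimes\widetilde\uv=\mathbf{w}\otimes\uv+\widetilde\uv\otimes\mathbf{w}$ and moving $A^{-1}$ off the nonlinearity, the right-hand side is bounded by $\tfrac{\nu}{2}\|\g\mathbf{w}\|_{L^2}^2+C(M)\|\mathbf{w}\|_{L^2}^2$, so that $\mathbf{w}(0)=0$ and Gronwall force $\mathbf{w}\equiv0$. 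All formal integrations by parts are justified a posteriori by the smoothing just established, or, if preferred, performed on a mollified (Galerkin) approximation and passed to the limit.
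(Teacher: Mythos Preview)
Your argument is correct and essentially complete, but it proceeds along a genuinely different route from the paper. The paper works throughout with the explicit Oseen tensor representation~(\ref{eq:oseen_rep}) rather than the abstract semigroup, and this colors every step. For local existence the paper sets up the Picard iterates~(\ref{eq:ite_def}) directly in terms of $\g {\bf T}$ and proves contraction in $C([0,\tau_{Lip}];H^2)$ via~(\ref{eq:calcul_integral}); your fixed point achieves the same thing more abstractly. The real divergence is in the no--blow-up step. You close an $H^2$ differential inequality using $\|B(\uv)\|_{H^2}\lesssim\|\uv\|_{H^1}\|\uv\|_{H^2}$ and Gronwall, obtaining exponential growth $e^{CMt}$. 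The paper (Lemma~\ref{lem:regularity_vitesse}) instead reads the $H^2$ bound directly off the Oseen formula: since the filter absorbs \emph{all} the derivatives, one has $\|D^m(\overline{\uv\otimes\uv})\|_{0,2}\le C\alpha^{-3}E_{\alpha,0}$ for every $m\le2$, and plugging this into~(\ref{eq:borne_H_2}) gives a direct bound growing only like $\sqrt{t/\nu}$, with no Gronwall at all. Likewise, where you invoke the blow-up alternative, the paper instead iterates the local construction and uses that $\sigma\mapsto\tau_{Lip}(\sigma)$ is non-increasing together with $E_\alpha(t)\le E_{\alpha,0}$ to show the successive time steps never shrink.

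What each buys: your approach is cleaner and uses only off-the-shelf semigroup and Riesz-transform machinery; it would transfer unchanged to any filter gaining two derivatives. The paper's approach is more hands-on but yields sharper quantitative information---explicit, polynomial-in-$t$ bounds with all constants tracked in $\alpha$ and $\nu$---which is precisely what one wants for the $\alpha\to0^+$ limit announced in the introduction. Your justification of the $H^2$ differential inequality is slightly informal (it needs $\p_t\uv\in H^2$, which you do not yet have), but the integral form of the same Gronwall argument applied directly to the mild formulation is rigorous and gives the same conclusion, so this is not a gap.
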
 
In particular, regular solutions of the NSEB-$\alpha$ model do not become singular in a
finite time, and we found that the model exerts a strong regularizing effect on the
pressure.

\medskip

The convergence, as $\alpha\to0^{+}$, of solutions to the NSEB-$\alpha$ model to weak
solutions of the Navier-Stokes equations will studied in a forthcoming
paper~\cite{LBRL17}.
\section{The Helmholtz filter}
\label{sec:helmholz_filter} 
This section is devoted to the study of the Helmholtz equation~(\ref{eq:Helmholtz}) and
its associated Green's kernel. We will:
\begin{enumerate}[i)]
\item set some notations;
\item introduce the Green's kernel and deduce a few basic inequalities from usual results
  about convolutions;
\item draw the link between the integral representation and the variational solutions
  of~(\ref{eq:Helmholtz}).
\end{enumerate}
\subsection{General setting}
Let $\alpha = (\alpha_1, \alpha_2, \alpha_3) \in \N^3$ be a multi-index and let $| \alpha
| = \alpha_1 + \alpha_2+ \alpha_3$, then we denote as usual
\begin{equation*} D^\alpha \uv = (D^\alpha u_1, D^\alpha u_2, D^\alpha u_3),
  \quad\text{with}\quad D^\alpha u_i = { \p ^{| \alpha | }u_i \over \p x_1 ^{\alpha_1} \p
    x_2 ^{\alpha_2 } \p x_3 ^{\alpha_3} }.
\end{equation*}
For any given $m \in \N$, when we write $D^m \uv$ we assume that $D^\alpha \uv$ is well
defined whatever $\alpha$ is a multi-index such that $| \alpha | = m$, and in practical
calculations we will use the following notation
\begin{equation*} 
  | D^m \uv | = \sup_{ | \alpha | = m } | D^\alpha \uv |.
\end{equation*}
%
space $W^{m,p}(\R^3)$ is equipped with the norm
$$
 \| w \|_{m,p} = \sum_{j=0}^m \| D^j w \|_{L^p(\R^3)},
$$
as usual $H^m(\R^3) = W^{m,2}(\R^3)$.  Throughout the paper, we will use the following
consequence of the Green's formula in $\R^3$ (see~\cite[Eq.~(1.11)]{Ler1934}):
\begin{equation}
  \label{eq:green} 
  \forall \, u \in W^{1,p}(\R^3), \forall \, v \in W^{1,p'}(\R^3), \qquad
  \int_{\R^3} \left [u (\x) {\p v \over \p x_i} (\x) + {\p u \over \p x_i} (\x) v (\x)
  \right ] \, d\x = 0,
\end{equation}
for $1 < p < \infty$, where as usual $1/p + 1/p' =1$.
\subsection{The Helmholtz kernel}
In order to prove the main estimates it turns useful to use the integral representation
formula for solutions of~\eqref{eq:Helmholtz}. Let $H_\alpha$ denotes, for $\alpha>0$ the
kernel given by
\begin{equation}
  \label{eq;filter} 
  H_\alpha (\x ) = {1 \over 4 \pi \alpha^2} {e^{-{ | \x | \over \alpha}}
    \over | \x | }\qquad\text{defined for } \x\not=\mathbf{0}.
\end{equation} 
We notice that $\| H_\alpha \|_{0, 1} = 1$ and
\begin{equation*}
  \forall \, \x \not=0, \qquad - \alpha^2 \Delta H_\alpha (\x) + H_\alpha(\x) = 0,
\end{equation*} 
which leads to the following result
\begin{lemma} 
  Let $\varphi \in C^\infty_c(\R^3)$. Then
  \begin{equation}
    \label{eq:strong} 
    \forall \, \x \in \R^3, \qquad \int_{\R^3} (-\alpha^2 \Delta \varphi ({\bf y})+ \varphi
    ({\bf y}) ) H(\x-{\bf y})\, d{\bf y} = \varphi ( \x). 
\end{equation}
\end{lemma}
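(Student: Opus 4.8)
The plan is to show that $H_\alpha$ is the fundamental solution of the operator $-\alpha^2\Delta + 1$ on $\R^3$, that is, that $(-\alpha^2\Delta + 1)H_\alpha = \delta_{\mathbf 0}$ in the sense of distributions; since $H_\alpha$ is an even (radial) kernel, testing this identity against the translate $\varphi(\x - \cdot)$ is exactly the content of~\eqref{eq:strong}. First I would observe that the integral in~\eqref{eq:strong} is absolutely convergent: the factor $g({\bf y}) := -\alpha^2\Delta\varphi({\bf y}) + \varphi({\bf y})$ is continuous with compact support, while ${\bf y}\mapsto H_\alpha(\x-{\bf y})$ has only the integrable singularity $|\x-{\bf y}|^{-1}$ near ${\bf y}=\x$ (recall $\| H_\alpha\|_{0,1}=1$). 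Hence it is enough to compute the limit, as $\epsilon\to0^+$, of the truncated integrals over $\Omega_\epsilon := \R^3\setminus B_\epsilon(\x)$, which converge to the full integral by dominated convergence.

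On $\Omega_\epsilon$ the kernel ${\bf y}\mapsto H_\alpha(\x-{\bf y})$ is smooth and satisfies $-\alpha^2\Delta_{\bf y} H_\alpha(\x-{\bf y}) + H_\alpha(\x-{\bf y}) = 0$. The core step is to apply Green's second identity to the pair $\varphi$ and $H_\alpha(\x-\cdot)$ on $\Omega_\epsilon$; since $\varphi$ has compact support there is no contribution at infinity, so only the inner sphere $\partial B_\epsilon(\x)$ survives. Using $\Delta_{\bf y} H_\alpha = \alpha^{-2}H_\alpha$ on $\Omega_\epsilon$, the volume term $-\alpha^2\int_{\Omega_\epsilon}\varphi\,\Delta_{\bf y}H_\alpha\,d{\bf y}$ cancels exactly the term $\int_{\Omega_\epsilon}\varphi\,H_\alpha\,d{\bf y}$, so the whole truncated integral reduces to the surface integral
\begin{equation*}
  I_\epsilon = \alpha^2\int_{\partial B_\epsilon(\x)}\Big(\varphi({\bf y})\,\partial_n H_\alpha(\x-{\bf y}) - H_\alpha(\x-{\bf y})\,\partial_n\varphi({\bf y})\Big)\,dS({\bf y}),
\end{equation*}
where $n$ is the unit normal to $\partial B_\epsilon(\x)$ outward from $\Omega_\epsilon$, i.e.\ pointing towards $\x$.

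Finally I would pass to the limit $\epsilon\to0^+$ in the two surface terms. On $\partial B_\epsilon(\x)$ one has $H_\alpha\sim (4\pi\alpha^2\epsilon)^{-1}$ and $|\partial_n\varphi|$ is bounded, so the second term is $O(\epsilon)$ and vanishes. For the first term, the radial form gives $\partial_r H_\alpha(r) = -\frac{1}{4\pi\alpha^2}\big(\frac{1}{\alpha r}+\frac{1}{r^2}\big)e^{-r/\alpha}$, whose leading behaviour as $r\to0$ is $-\frac{1}{4\pi\alpha^2 r^2}$; since $n$ points towards $\x$ one has $\partial_n H_\alpha = -\partial_r H_\alpha \sim \frac{1}{4\pi\alpha^2 \epsilon^2}$ on the sphere of radius $\epsilon$. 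Multiplying by $\alpha^2$, by the surface measure $\sim 4\pi\epsilon^2$, and using $\varphi({\bf y})\to\varphi(\x)$, this term converges to $\varphi(\x)$. Collecting the two limits yields $I_\epsilon\to\varphi(\x)$, which is~\eqref{eq:strong}.

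The main obstacle is the careful bookkeeping of the boundary integral: one must fix the sign convention for the outward normal of $\Omega_\epsilon$, verify that the singular part $-\frac{1}{4\pi\alpha^2 r^2}$ of $\partial_r H_\alpha$ produces precisely the weight $+1$ after integration over the shrinking sphere, and control the errors coming both from replacing $\varphi({\bf y})$ by $\varphi(\x)$ (uniform continuity of $\varphi$) and from the subleading $\frac{1}{\alpha r}e^{-r/\alpha}$ piece (which is $O(\epsilon)$). Everything else is standard potential theory; as a cross-check, the same identity follows from $\widehat{H_\alpha}(\xi) = (1+\alpha^2|\xi|^2)^{-1}$, but the excision argument is the one consistent with the classical approach announced in the introduction.
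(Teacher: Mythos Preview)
Your proof is correct and is exactly the classical excision argument the paper has in mind: the paper does not actually write out a proof but simply states that ``the classical proof is very close to that of the Green's representation formula in Gilbarg and Trudinger~[Ch.~2], so we skip the details.'' Your argument---remove $B_\epsilon(\x)$, apply Green's second identity, use that $H_\alpha$ solves the homogeneous equation on $\Omega_\epsilon$, and pass to the limit in the surface integral---is precisely that standard potential-theory computation, with the sign and leading-order bookkeeping carried out correctly.
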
 
The classical proof is very close to that of the Green's representation formula in Gilbarg
and Trudinger~\cite[Ch.~2]{GT01}, so we skip the details. Thereby, we get
\begin{equation*}
  \forall\,\alpha>0\qquad-\alpha^2 \Delta H_\alpha + H_\alpha = \delta_0,
\end{equation*} 
in the sense of distributions over $\R^3$, where $\delta_0$ denotes the Dirac (delta)
measure centered at the origin.

\smallskip
 
Let $\psi$ be any measurable function and let $\overline \psi$ denote the filtered
function:
\begin{equation}
  \label{eq:helmconv} 
  \overline \psi (\x) = H_\alpha \star \psi (\x) = \int_{\R^3} H_\alpha
  (\x - {\bf y}) \psi ( {\bf y})\, d{\bf y},
\end{equation} 
as long as the integral converges.  We deduce from the Cauchy-Schwarz and Young
inequalities the series of formal inequalities:
\begin{eqnarray} 
  && 
  \label{eq:estimates_1} 
  \|\overline \psi \|_{0, p} \le \| \psi \|_{0,
    p}, \quad \text{for all }1 \le p \le \infty, 
  \\
  && \label{eq:estimates_2} 
  \| \g \overline
  \psi \|_{0, 2} \le {2 \over \alpha} \| \psi \|_{0, 2}, 
  \\ 
  && \label{eq:norm_infty_bound}
  \| \overline \psi \|_{0, \infty} \le {1 \over \sqrt {8 \pi} \alpha^{3 \over 2} } \| \psi
  \|_{0, 2}, 
  \\ && \label{eq:norm2_1} 
  \|\overline \psi \|_{0, 2} \le {1 \over \sqrt {8 \pi}
    \alpha^{3 \over 2} } \| \psi \|_{0, 1}.
\end{eqnarray}
\begin{Remark}
  \label{rem:reg_helm} 
  We notice that $H_\alpha \in L^q(\R^3)$, for all $q<3$, and it is continuous for
  $\x\not=0$. Then, if $\psi \in L^2(\R^3)$, we have that $\overline \psi \in C^0(\R^3)$,
  but it does not necessary belong to $C^1(\R^3)$. On the other hand, when $\psi \in
  L^p(\R^3)$ for some $p>3$, then as $\g H_\alpha \in L^r(\R^3)$, $r<3/2$ and is
  continuous for $\x \not=0$, we obtain that $\overline \psi \in C^1(\R^3)$.
  
  In any case, assuming just that $\psi \in L^q(\R^3)$ it is not possible to prove that
  $\overline \psi \in C^2(\R^3)$ since $D^2 H_\alpha \notin L^p(\R^3) $, whatever the
  value of $p>1$ is taken. However, when $\psi \in H^1(\R^3)$, then we have $\overline
  \psi \in C^1_b (\R^3) \cap H^2(\R^3)$.
\end{Remark}
\subsection{Variational formulation and related properties}
With the estimates~(\ref{eq:estimates_1}), the relation~(\ref{eq:strong}) combined with
the rotational symmetry of the kernel $H_\alpha$ and the Green's formula~(\ref{eq:green}),
we obtain:
\begin{lemma}
  \label{lem:weak_sol_Helm} 
  Let $\psi \in L^2(\R^3)$. Then $ \overline \psi \in H^{1}(\R^{3)}$ is the unique weak
  solution to the Helmholtz equation~(\ref{eq:Helmholtz}), in the following sense:
  \begin{equation}
    \label{eq:weak_Helmhoz} 
    \forall \, \varphi \in H^1(\R^3), \quad \alpha^2 \int_{\R^3}
    \g \overline \psi (\x) \cdot \g \varphi (\x) \, d\x + \int_{\R^3} \overline \psi (\x) \,
    \varphi (\x) \, d\x = \int_{\R^3} \psi (\x) \, \varphi (\x) \, d\x .
  \end{equation}
\end{lemma}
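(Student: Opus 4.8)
The plan is to split the statement into three parts: the membership $\overline\psi\in H^1(\R^3)$, the weak identity~(\ref{eq:weak_Helmhoz}), and uniqueness. The first part is immediate from the bounds already established: since $\psi\in L^2(\R^3)$, estimate~(\ref{eq:estimates_1}) with $p=2$ gives $\|\overline\psi\|_{0,2}\le\|\psi\|_{0,2}$ and~(\ref{eq:estimates_2}) gives $\|\g\overline\psi\|_{0,2}\le(2/\alpha)\|\psi\|_{0,2}$, so $\overline\psi\in H^1(\R^3)$ with norm controlled by $\|\psi\|_{0,2}$. For the identity, I would first prove it for $\varphi\in C^\infty_c(\R^3)$ and then pass to the limit using the density of $C^\infty_c(\R^3)$ in $H^1(\R^3)$: once $\overline\psi\in H^1(\R^3)$ and $\psi\in L^2(\R^3)$ are fixed, both sides of~(\ref{eq:weak_Helmhoz}) are continuous linear functionals of $\varphi$ for the $H^1$ norm, so the extension is automatic.

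The core computation starts from the pointwise identity~(\ref{eq:strong}). Writing $g:=-\alpha^2\Delta\varphi+\varphi\in C^\infty_c(\R^3)$ and using the rotational symmetry $H_\alpha(\x-{\bf y})=H_\alpha({\bf y}-\x)$, relation~(\ref{eq:strong}) becomes $\varphi=H_\alpha\star g$. I would then insert this into $\int_{\R^3}\psi\,\varphi$ and exchange the order of integration,
\begin{equation*}
\int_{\R^3}\psi(\x)\,\varphi(\x)\,d\x=\int_{\R^3}\psi(\x)\Big(\int_{\R^3}g({\bf y})\,H_\alpha(\x-{\bf y})\,d{\bf y}\Big)\,d\x=\int_{\R^3}g({\bf y})\,\overline\psi({\bf y})\,d{\bf y},
\end{equation*}
where the last equality again uses the symmetry of $H_\alpha$ to recognise $\int_{\R^3}\psi(\x)\,H_\alpha(\x-{\bf y})\,d\x=\overline\psi({\bf y})$. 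The Fubini exchange is the one point that requires care, but it is justified because $g$ is bounded with compact support: by Young's inequality $|g|\star H_\alpha\in L^2(\R^3)$ with $\||g|\star H_\alpha\|_{0,2}\le\|g\|_{0,2}$ (recall $\|H_\alpha\|_{0,1}=1$), and hence $\int_{\R^3}|\psi|\,(|g|\star H_\alpha)\le\|\psi\|_{0,2}\,\|g\|_{0,2}<\infty$ by Cauchy--Schwarz.

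It then remains to rewrite $\int_{\R^3}g\,\overline\psi=-\alpha^2\int_{\R^3}\overline\psi\,\Delta\varphi+\int_{\R^3}\overline\psi\,\varphi$ in the desired form. Since $\overline\psi\in H^1(\R^3)=W^{1,2}(\R^3)$ and each $\p_i\varphi\in C^\infty_c(\R^3)$, applying Green's formula~(\ref{eq:green}) componentwise with $p=p'=2$ yields $\int_{\R^3}\overline\psi\,\Delta\varphi=-\int_{\R^3}\g\overline\psi\cdot\g\varphi$, so that $\int_{\R^3}\psi\,\varphi=\alpha^2\int_{\R^3}\g\overline\psi\cdot\g\varphi+\int_{\R^3}\overline\psi\,\varphi$, which is precisely~(\ref{eq:weak_Helmhoz}) for $\varphi\in C^\infty_c(\R^3)$; density then delivers it for all $\varphi\in H^1(\R^3)$. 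Uniqueness follows from coercivity: if $w\in H^1(\R^3)$ satisfies the identity with $\psi\equiv0$, testing against $\varphi=w$ gives $\alpha^2\|\g w\|_{0,2}^2+\|w\|_{0,2}^2=0$, whence $w=0$. The main obstacle is thus the rigorous justification of the Fubini step together with the careful bookkeeping of the kernel's symmetry; the remaining manipulations are standard.
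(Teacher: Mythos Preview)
Your proof is correct and follows essentially the same route as the paper: both arguments rely on the representation~(\ref{eq:strong}), the rotational symmetry of $H_\alpha$, Fubini's theorem, Green's formula~(\ref{eq:green}), and density of $C^\infty_c(\R^3)$ in $H^1(\R^3)$. The only cosmetic difference is the direction of the computation---the paper starts from the bilinear form and reduces to $\int\psi\,\varphi$, while you start from $\int\psi\,\varphi$ and build up the bilinear form---and you replace the appeal to Lax--Milgram by a direct coercivity argument for uniqueness together with the explicit $H^1$ bounds~(\ref{eq:estimates_1})--(\ref{eq:estimates_2}); your justification of the Fubini step is also more detailed than the paper's.
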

\begin{proof} 
  The existence and uniqueness of a solution to the variational
  problem~(\ref{eq:weak_Helmhoz}) is a consequence of the Lax-Milgram Theorem in
  $H^1(\R^3)$. We must check that this coincides with the convolution
  formula~(\ref{eq:helmconv}). Let $\varphi \in C^\infty_c(\R^3)$; by Fubini's Theorem and
  the Green's formula, we have
  \begin{equation*}
    \begin{array}{l}
      \zoom \alpha^2 \int_{\R^3} \g \overline \psi (\x) \cdot \g \varphi (\x)
      \, d\x + \int_{\R^3} \overline \psi (\x) \, \varphi (\x) \, d\x 
      \\
      = \zoom \int_{\R^3}
      \int_{\R^3} [\alpha^2 \g_\x H_\alpha(\x-{\bf y}) \cdot \g \varphi (\x) + H_\alpha (\x-{\bf
        y}) \varphi (\x) ] \psi ({\bf y})\, d{ \bf y} d\x 
      \\
      =\zoom \int_{\R^3} \psi ({\bf y}) d{
        \bf y} \left ( \int_{\R^3} [\alpha^2 \g_\x H_\alpha(\x-{\bf y}) \cdot \g \varphi (\x) +
        H_\alpha (\x-{\bf y}) \varphi (\x)]\, d\x \right )
      \\ 
      = \zoom \int_{\R^3} \psi ({\bf y}) d{
        \bf y} \left ( \int_{\R^3} [-\alpha^2 \Delta \varphi (\x) + \varphi (\x) ] H_\alpha ({\bf
          y}-\x)\,d\x \right ) = \int_{\R^3} \psi ({\bf y}) \varphi ({\bf y}) \,d{ \bf y},
 \end{array}
\end{equation*}
where we have used~(\ref{eq:strong}) and the fact that $H_\alpha (\x) = H_\alpha (-\x) $.
We conclude by a density argument\footnote{Applying Lemma 1.1 in Galdi and
  Simader~\cite{GS90}, we deduce that any $\varphi \in H^1(\R^3)$ goes to zero at
  infinity. So density of $C_c^\infty(\R^3)$ can be obtained by mollifying and
  truncating.}.
\end{proof}
The following corollary is straightforward.
\begin{corollary} 
  The bar operator is self-adjoint with respect to the $L^2$-scalar product
  $(\,.\,,\,.\,)$, which means:
\begin{equation*}
  \forall \, \psi, \varphi \in L^2(\R^3), \qquad (\overline \psi, \varphi)
  = ( \psi, \overline \varphi).
\end{equation*}
\end{corollary}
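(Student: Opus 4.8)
The plan is to unwind the definition of the bar operator as convolution with the kernel $H_\alpha$ and to exploit its evenness $H_\alpha(\x) = H_\alpha(-\x)$, already used in the proof of Lemma~\ref{lem:weak_sol_Helm}, together with an application of Fubini's theorem. First I would write, using the convolution formula~\eqref{eq:helmconv},
\begin{equation*}
  (\overline \psi, \varphi) = \int_{\R^3} \Bigl( \int_{\R^3} H_\alpha(\x - {\bf y}) \psi({\bf y})\, d{\bf y}\Bigr) \varphi(\x)\, d\x,
\end{equation*}
then interchange the order of integration and regroup the factors so that the inner integral runs over the variable $\x$:
\begin{equation*}
  (\overline\psi, \varphi) = \int_{\R^3} \psi({\bf y}) \Bigl( \int_{\R^3} H_\alpha(\x - {\bf y})\varphi(\x)\, d\x \Bigr)\, d{\bf y}.
\end{equation*}
Invoking the symmetry $H_\alpha(\x - {\bf y}) = H_\alpha({\bf y} - \x)$, the inner integral is precisely $\overline\varphi({\bf y})$, so the claimed identity $(\overline\psi,\varphi) = (\psi,\overline\varphi)$ follows immediately.

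The only point deserving care — and the main, albeit minor, obstacle — is the justification of Fubini's theorem, namely the absolute integrability of the map $(\x,{\bf y}) \mapsto H_\alpha(\x-{\bf y})\,|\psi({\bf y})|\,|\varphi(\x)|$ over $\R^3 \times \R^3$. I would establish this from the $L^1$-normalization $\| H_\alpha \|_{0,1} = 1$ combined with Young's inequality in the form~\eqref{eq:estimates_1}: applying the latter to $|\psi| \in L^2(\R^3)$ gives $H_\alpha \star |\psi| \in L^2(\R^3)$ with $\| H_\alpha \star |\psi| \|_{0,2} \le \|\psi\|_{0,2}$, whence by Cauchy--Schwarz
\begin{equation*}
  \int_{\R^3}\int_{\R^3} H_\alpha(\x-{\bf y})\,|\psi({\bf y})|\,|\varphi(\x)|\, d{\bf y}\, d\x \le \|\varphi\|_{0,2}\,\| H_\alpha \star |\psi| \|_{0,2} \le \|\varphi\|_{0,2}\,\|\psi\|_{0,2} < \infty.
\end{equation*}
This finite bound simultaneously guarantees that $(\overline\psi,\varphi)$ is well-defined for all $\psi,\varphi \in L^2(\R^3)$ and licenses the exchange of the order of integration performed above. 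An alternative route would pass through the variational characterization of Lemma~\ref{lem:weak_sol_Helm} and the symmetry of the associated bilinear form, but since that formulation is restricted to test functions in $H^1(\R^3)$ it would require an additional density argument to reach general $\varphi \in L^2(\R^3)$; the direct convolution computation is therefore the cleaner path.
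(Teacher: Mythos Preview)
Your proof is correct and complete; the convolution computation with the evenness of $H_\alpha$ and the Fubini justification via $\|H_\alpha\|_{0,1}=1$ is entirely sound.

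The paper itself gives no proof beyond calling the corollary ``straightforward'' immediately after Lemma~\ref{lem:weak_sol_Helm}, which suggests the intended route is the variational one. One small correction to your closing remark: that route does \emph{not} require a density argument. Given $\psi,\varphi\in L^2(\R^3)$, both $\overline\psi$ and $\overline\varphi$ lie in $H^1(\R^3)$ by Lemma~\ref{lem:weak_sol_Helm}, so one may test the variational identity~\eqref{eq:weak_Helmhoz} for $\overline\psi$ against $\overline\varphi$, and the identity for $\overline\varphi$ against $\overline\psi$; the left-hand sides coincide by symmetry of the bilinear form, yielding $(\psi,\overline\varphi)=(\varphi,\overline\psi)$ directly. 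Both arguments are equally short; yours has the virtue of being self-contained from the kernel formula, while the variational one explains why the corollary is placed where it is.
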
 
The convergence of $\overline \psi$ to $\psi$ as $\alpha \to 0^{+}$ is the aim of the next
lemma.
\begin{lemma}
  \label{lem:estimate_convergence} 
  Let $1 \leq p \le \infty$ and let $\psi \in W^{2,p}(\R^3)$. Then
  \begin{equation}
    \label{eq:est_hausdorff}
    \| \overline \psi - \psi \|_{0, p} \le \alpha^2 
    \|\Delta \psi \|_{0, p}.
  \end{equation}
\end{lemma}
 \begin{proof} 
   It is enough to prove the estimate~(\ref{eq:est_hausdorff}) when $\psi \in
   C_c^\infty(\R^3)$. In this case, we deduce from~(\ref{eq;filter})
   and~(\ref{eq:helmconv}) that $\overline \psi, \g\overline \psi = \mathcal{O}(e^{-{ | \x
       |\over \alpha }} )$, which allows the following integrations by parts. Let $\delta
   \psi = \overline \psi - \psi$, that satisfies the equation
   \begin{equation}
     \label{eq:deltapsi} 
     - \alpha^2 \Delta \delta \psi + \delta \psi = \Delta \psi.
   \end{equation} 
   Let $1 \le p < \infty$ be given. We take $\delta \psi | \delta \psi |^{p-2}$ as test
   function in the equation~(\ref{eq:deltapsi}), which yields\footnote{Strictly speaking
     we should first take $\delta \psi ( \E + \delta \psi^2 )^{(p-2)/2} $ as test
     function, and then pass to the limit when $\E \to 0^+$ once the estimate is
     established. This is standard, so that we skip the details. The reader can see this
     for instance in Di Perna-Lions~\cite{DL89}, where similar calculations are carried
     out.}
   \begin{equation*}
     (p-1)\,\alpha^2 \inte |\g \delta \psi |^2 | \delta \psi |^{p-2} + \inte |\delta \psi |^p \le
     \alpha^2 \inte |\Delta \psi |\, | \delta \psi |^{p-1}.
 \end{equation*}
 The estimate~(\ref{eq:est_hausdorff}) follows 
From the H\"older inequality we get
   \begin{equation*}
\|\delta \psi \|_{0,p}^p \le
     \alpha^2 \|\Delta \psi \|_{0,p} \| \delta \psi \|^{p-1}_{0,p}\qquad 1\leq p<+\infty,
 \end{equation*}
 from which we get the thesis when $1 \le p < \infty$.  As $\psi \in C^\infty_c$, then
 $\|\psi\|_{0,\infty}=\lim_{p\to+\infty}\|\psi\|_{0,p}$ and in view of the decay of
 $\overline \psi$ at infinity, we get, by passing to the
 limit when $p \to \infty$ in the right-hand side of~(\ref{eq:est_hausdorff}),
 \begin{equation*}
   \limsup_{p\to+\infty}  \| \overline \psi - \psi \|_{0, p} \le \alpha^2 
   \|\Delta \psi \|_{0, \infty},
 \end{equation*}
which proves the estimate also in the limit case.
\end{proof}
From Lemma~\ref{lem:estimate_convergence} and a straightforward density argument, we get:
\begin{lemma} 
  Let $1 \le p < \infty$, $\psi \in L^p(\R^3)$. Then $\overline \psi \to \psi$ in
  $L^p(\R^3)$ when $\alpha \to 0^{+}$.
\end{lemma}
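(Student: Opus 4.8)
The plan is to run a standard three-$\varepsilon$ density argument, using the two facts already at our disposal: the uniform-in-$\alpha$ contraction estimate~(\ref{eq:estimates_1}), namely $\|\overline\varphi\|_{0,p}\le\|\varphi\|_{0,p}$, and the quantitative convergence rate of Lemma~\ref{lem:estimate_convergence} available for regular functions. Since $1\le p<\infty$, the space $C_c^\infty(\R^3)$ is dense in $L^p(\R^3)$, so given $\psi\in L^p(\R^3)$ and $\varepsilon>0$ I would first fix $\varphi\in C_c^\infty(\R^3)$ with $\|\psi-\varphi\|_{0,p}\le\varepsilon/3$.

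Next, exploiting the linearity of the bar operator (the map $\psi\mapsto\overline\psi=H_\alpha\star\psi$ is a convolution, hence linear, so that $\overline\psi-\overline\varphi=\overline{\psi-\varphi}$), I would split
\begin{equation*}
  \|\overline\psi-\psi\|_{0,p}\le\|\overline{\psi-\varphi}\|_{0,p}+\|\overline\varphi-\varphi\|_{0,p}+\|\varphi-\psi\|_{0,p}.
\end{equation*}
The first term is at most $\|\psi-\varphi\|_{0,p}\le\varepsilon/3$ by~(\ref{eq:estimates_1}), and the third is $\le\varepsilon/3$ by the choice of $\varphi$; both bounds hold uniformly in $\alpha$. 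The middle term is controlled by Lemma~\ref{lem:estimate_convergence}: since $\varphi\in C_c^\infty(\R^3)\subset W^{2,p}(\R^3)$, we have $\|\overline\varphi-\varphi\|_{0,p}\le\alpha^2\|\Delta\varphi\|_{0,p}$, which tends to $0$ as $\alpha\to0^+$ because $\|\Delta\varphi\|_{0,p}$ is a fixed finite constant. Hence for $\alpha$ small enough the middle term is also $\le\varepsilon/3$, and we conclude $\|\overline\psi-\psi\|_{0,p}\le\varepsilon$.

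There is no genuine obstacle here; the only point requiring care is that the contraction bound~(\ref{eq:estimates_1}) holds \emph{uniformly} in $\alpha$, which is precisely what lets us absorb the approximation error $\|\psi-\varphi\|_{0,p}$ into the first term independently of the (vanishing) parameter. This is also where the hypothesis $p<\infty$ enters in an essential way: for $p=\infty$ the space $C_c^\infty(\R^3)$ fails to be dense in $L^\infty(\R^3)$, so the initial approximation step breaks down and, in fact, the conclusion cannot be expected to hold for general $\psi\in L^\infty(\R^3)$.
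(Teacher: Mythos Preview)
Your proof is correct and is exactly the ``straightforward density argument'' the paper invokes (without spelling it out): approximate in $L^p$ by $C_c^\infty$, use Lemma~\ref{lem:estimate_convergence} on the smooth approximant, and control the remaining terms uniformly in $\alpha$ via the contraction bound~(\ref{eq:estimates_1}). There is nothing to add.
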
 
We also will need the following Leibniz like formula:
\begin{lemma}
  \label{lem:differentiation} 
  Let $\varphi, \psi \in H^1(\R^3) \cap L^\infty(\R^3)$. Then
  \begin{equation}
    \label{eq:differentiation} 
    D \,\overline{\varphi \psi } = \overline {\psi D \varphi}+
    \overline {\varphi D \psi},
\end{equation} 
where $D$ denotes any partial derivative $\frac{\partial}{\partial x_i}$, $i=1,2,3$.
\end{lemma}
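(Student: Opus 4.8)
The plan is to deduce~\eqref{eq:differentiation} from two independent facts: first, that the bar operator commutes with each partial derivative on $H^1(\R^3)$, namely $D\overline f = \overline{Df}$ whenever $f \in H^1(\R^3)$; and second, the ordinary Sobolev product rule $D(\varphi\psi) = \psi\, D\varphi + \varphi\, D\psi$. Granting both, the conclusion is immediate: since $\varphi,\psi \in H^1(\R^3)\cap L^\infty(\R^3)$ the product $\varphi\psi$ lies in $H^1(\R^3)$, so applying the commutation property to $f = \varphi\psi$ and then using the linearity of the filter gives
\begin{equation*}
  D\,\overline{\varphi\psi} = \overline{D(\varphi\psi)} = \overline{\psi\, D\varphi + \varphi\, D\psi} = \overline{\psi\, D\varphi} + \overline{\varphi\, D\psi}.
\end{equation*}

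The heart of the matter is the commutation $D\overline f = \overline{Df}$, which I would establish from the variational characterization of Lemma~\ref{lem:weak_sol_Helm}. Let $f \in H^1(\R^3)$ and set $g = \overline f$; by Remark~\ref{rem:reg_helm} we have $g \in H^2(\R^3)$, so $Dg \in H^1(\R^3)$ is well defined. Writing the weak Helmholtz equation~\eqref{eq:weak_Helmhoz} for $g$ and choosing the test function $D\varphi$ with $\varphi \in C_c^\infty(\R^3)$, I would integrate by parts once in each term, using $g \in H^2(\R^3)$ to transfer the derivative through $\g g$ and using $f \in H^1(\R^3)$ to transfer it onto $f$. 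This yields, for every $\varphi \in C_c^\infty(\R^3)$,
\begin{equation*}
  \alpha^2 \int_{\R^3} \g(Dg)\cdot \g\varphi\, d\x + \int_{\R^3} (Dg)\,\varphi\, d\x = \int_{\R^3} (Df)\,\varphi\, d\x,
\end{equation*}
which by density extends to all $\varphi \in H^1(\R^3)$. This is precisely the weak equation satisfied by $\overline{Df}$ (note $Df \in L^2(\R^3)$), so the uniqueness part of Lemma~\ref{lem:weak_sol_Helm} forces $Dg = \overline{Df}$, that is, $D\overline f = \overline{Df}$.

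It remains to verify the product rule and the requisite integrability. Since $\varphi, \psi \in L^\infty(\R^3)$ and $\g\varphi, \g\psi \in L^2(\R^3)$, each of $\psi\, D\varphi$ and $\varphi\, D\psi$ belongs to $L^2(\R^3)$, hence so does their sum, and the standard approximation of $H^1\cap L^\infty$ functions by smooth truncations gives $\varphi\psi \in H^1(\R^3)$ with $D(\varphi\psi) = \psi\,D\varphi + \varphi\,D\psi$. The main obstacle is the rigorous justification of the commutation step: one cannot simply differentiate the kernel, since by Remark~\ref{rem:reg_helm} the second derivatives $D^2 H_\alpha$ fail to be integrable. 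Routing the argument through the variational formulation, where the derivative is transferred onto the $H^1$ datum $f$ rather than onto the singular kernel, circumvents this difficulty cleanly; the only care needed is the double integration by parts, which is legitimate precisely because $g \in H^2(\R^3)$ and the test function is compactly supported.
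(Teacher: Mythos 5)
Your proof is correct and rests on the same mechanism as the paper's: differentiate the Helmholtz equation satisfied by $\overline{\varphi\psi}$, apply the Sobolev product rule, and invoke the uniqueness part of Lemma~\ref{lem:weak_sol_Helm}. The only difference is organizational — you isolate the commutation identity $D\overline{f}=\overline{Df}$ and justify it via the variational formulation (noting $\overline{f}\in H^2(\R^3)$ so that $D\overline{f}\in H^1(\R^3)$), which makes explicit a point the paper's ``differentiating in the sense of distributions'' leaves implicit.
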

\begin{proof} 
  According to the assumptions about $\varphi$ and $\psi$, the products $\varphi\, \psi$,
  $\psi\, D \varphi$ and $\varphi\, D \psi$ are all in $L^2(\R^3)$. Therefore, we deduce
  from Lemma~\ref{lem:weak_sol_Helm} that (at least in a weak sense),
\begin{equation*} 
  -\alpha^2 \Delta\overline{\varphi \psi } + \overline{\varphi \psi } =
  \varphi \psi,
\end{equation*}
which yields by differentiating (at least in the sense of the distributions),
\begin{equation*} 
  -\alpha^2 \Delta D\overline{\varphi \psi } + D\overline{\varphi \psi } =
  \varphi D \psi + \psi D \varphi.
\end{equation*} 
Moreover, again by Lemma~\ref{lem:weak_sol_Helm}, we have
\begin{equation*}
  -\alpha^2 \Delta (\overline {\psi D \varphi})+ \overline {\psi D
    \varphi} = {\psi D \varphi} \quad\text{and}\quad -\alpha^2 \Delta (\overline {\varphi D
    \psi})+ \overline {\varphi D \psi} = {\varphi D \psi},
\end{equation*} 
hence~(\ref{eq:differentiation}) follows, due to the uniqueness of the solution.
\end{proof} 
Finally, by applying the basic elliptic regularity to the Helmholtz
equation~(\ref{eq:Helmholtz}), we also have the estimate
\begin{equation}
  \label{eq:est_der_sec} 
  \| \overline \psi \|_{2, 2}\le {C \over \alpha}\| \psi \|_{0,2},
\end{equation}
and more generally, according to the standard elliptic theory (see, e.g,
Br\'ezis~\cite{HB83}), we always have 
\begin{equation}
  \label{eq:est_der_sob_gen} 
  \forall\,s \ge 0,\qquad \| \overline \psi \|_{s+2, 2}\le {C \over \alpha}\|
  \psi\|_{s,2}. 
\end{equation}
As a consequence we have the following result.
\begin{lemma} 
  Let $\suite \psi n$ be a sequence that converges to $\psi$ in $H^s(\R^3)$. Then
  $(\overline \psi_n)_{n \in \N}$ converges to $\overline \psi$ in $H^{s+2} (\R^3)$.
\end{lemma}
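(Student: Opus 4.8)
The plan is to exploit the linearity of the bar operator together with the continuity estimate (\ref{eq:est_der_sob_gen}) established just above, which is precisely the statement that the filter is a \emph{bounded} map $H^s(\R^3) \to H^{s+2}(\R^3)$. First I would observe that the filter $\psi \mapsto \overline{\psi}$, defined by the convolution (\ref{eq:helmconv}), is manifestly linear in $\psi$, since $H_\alpha \star (\lambda \psi + \mu \phi) = \lambda\, \overline{\psi} + \mu\, \overline{\phi}$; in particular $\overline{\psi_n} - \overline{\psi} = \overline{\psi_n - \psi}$ for every $n$.

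The core of the argument is then a single application of (\ref{eq:est_der_sob_gen}) to the difference $\psi_n - \psi$, which lies in $H^s(\R^3)$ by hypothesis. This gives
\begin{equation*}
  \| \overline{\psi_n} - \overline{\psi} \|_{s+2, 2}
  = \| \overline{\psi_n - \psi} \|_{s+2, 2}
  \le \frac{C}{\alpha} \, \| \psi_n - \psi \|_{s, 2}.
\end{equation*}
Since by assumption $\| \psi_n - \psi \|_{s, 2} \to 0$ as $n \to \infty$, the right-hand side tends to $0$, and therefore $\overline{\psi_n} \to \overline{\psi}$ in $H^{s+2}(\R^3)$, as claimed.

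There is essentially no obstacle here: the entire content of the lemma is the elementary fact that a bounded linear operator sends convergent sequences to convergent sequences, and all the analytic work has already been done in deriving (\ref{eq:est_der_sob_gen}) from the elliptic regularity theory for the Helmholtz operator. The only point worth verifying explicitly is that the constant $C$ appearing in (\ref{eq:est_der_sob_gen}) is the universal elliptic-regularity constant and in particular does not depend on the function to which the estimate is applied; this is what allows it to be used uniformly for all the differences $\psi_n - \psi$ and makes the passage to the limit legitimate.
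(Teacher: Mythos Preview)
Your proof is correct and is exactly the argument the paper intends: the lemma is stated as an immediate consequence of the bound (\ref{eq:est_der_sob_gen}), and you have simply written out that consequence, using linearity of the bar operator to reduce to $\overline{\psi_n - \psi}$ and then applying the estimate.
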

\section {A priori estimates and energy balance} 
\label{sec:GF}
To avoid repetition, we will assume throughout the rest of the paper that $\uv_0 \in
L^2(\R^3)^3$ and $\div \, \uv_0 = 0$. Regular solutions to the NSEB-$\alpha$ model are
defined in Definition~\ref{def:regular}.  For any fixed time $t$, then $\| \uv (t, \cdot)
\|_{s,p}$ denotes the norm in $W^{s,p}(\R^3)$ of the field $\x \mapsto \uv(t, \x)$ for a
given fixed $t$. \smallskip

Throughout this section, $(\uv, p)$ denotes an {\sl \`a priori} regular solution to the
NSEB-$\alpha$ model. We aim to figure out the optimal regularity of this solution and to
show that it satisfies the energy balance~(\ref{eq:energy_estimate_Bardina}). To do so, we
will:
\begin{enumerate}[i)]
\item precise few notations and practical functions linked to norms of $\uv$;
\item give the Oseen's integral representation for the calculation of the velocity $\uv$;
\item deduce from this representation additional regularity for $\p_t \uv$ and $p$. in
  order to get the energy balance;
\item find $H^m$ estimates for $(\uv, p)$ on the interval $[\tau, T[$ for any $0 < \tau <
  T$.
\end{enumerate} 
Finally, as the pressure $p$ is linked to the velocity $\uv$ by the
equation~(\ref{eq:pression}), we sometime will refer to $\uv$ as the solution of the
NSEB-$\alpha$ model, without talking about $p$.
\subsection{A few notations} Let $W(t)$ and $J(t)$ denote the following functions:
\begin{equation*}
W(t):=\| \uv (t, \cdot) \|_{0, 2}^2
  \qquad 
 \text{and}\qquad J(t) =: \|\g \uv (t, \cdot) \|_{0, 2}.
\end{equation*} 
At this stage, they could be not finite for some positive $t$.  The energy
balance~(\ref{eq:energy_estimate_Bardina}) is related to the function $E_{\alpha} (t)$
defined as follows
\begin{equation*} 
  E_\alpha (t) := \alpha^2 \int_{\R^3} |\g \uv (t, \x) |^2 d \x +
  \int_{\R^3} |\uv (t, \x) |^2 d\x = \alpha^2 J^2(t) + W(t),
\end{equation*} 
and we set
 \begin{equation*} 
   E_{\alpha, 0} :=\alpha^2 \int_{\R^3} |\g \overline {\uv_0} (\x) |^2 d
   \x + \int_{\R^3} |\ \overline {\uv_0} (\x) |^2 d\x.
 \end{equation*} 
 The following inequalities hold true:
\begin{equation*} 
  W(t) \le E_\alpha (t) \quad\text{and}\quad J(t) \le \alpha^{-1}
  \sqrt{E_\alpha (t) },
\end{equation*} 
and the energy balance~(\ref{eq:energy_estimate_Bardina}) can be rewritten as follows
\begin{equation}
  \label{eq:energy_estimate_Bardina_2} 
  {1 \over 2} E_\alpha(t) + \int_0^t \left ( \alpha^2
    \| \Delta \uv (t', \cdot) \|_{0, 2} ^2+ \nu J(t')^2 \right ) \,dt' = {1 \over 2} E_{\alpha,
    0}.
\end{equation} 
In particular, each solution $\uv$ of the NSEB-$\alpha$ model for
which~(\ref{eq:energy_estimate_Bardina_2}) holds is such that the function $t \mapsto E_\alpha
(t)$ is non-increasing and it satisfies
\begin{equation*}
  \forall t \in [0,T], \quad E_\alpha (t) \le E_{\alpha, 0} \le 5 \|
  \uv_0 \|_{0, 2}^2,
\end{equation*} 
where the latter inequality is deduced from the estimates~(\ref{eq:estimates_1})
and~(\ref{eq:estimates_2}). In the sequel we will use $E_{\alpha, 0}$ as control parameter
for the NSEB-$\alpha$ rather than $\| \uv_0 \|_{0, 2}^2$.
\subsection{Oseen representation}
Let be given any Navier-Stokes-like system in $\R^3$ of the form
\begin{equation*}
  \left \{ 
    \begin{array}{ll} 
      \p_t \uv + B(\uv, \uv) - \nu \Delta
      \uv + \g p = 0 & \quad \hbox{in } \R^+\times\R^3, 
      \\ 
      \div \, \uv = 0, & \quad \hbox{in }
      \R^+\times\R^3, 
      \\
      \uv_{ t = 0} = F({ \uv_0}) & \quad \hbox{in } \R^3.
    \end{array} \right.
\end{equation*}
In the case of the NSEB-$\alpha$ model,
\begin{equation*} 
  F(\uv_0) = \overline{\uv_0} \quad\text{and}\quad B(\uv, \uv) =\overline{
    \div(\uv\otimes \uv) }= \overline{ (\uv\cdot \g ) \uv }.
\end{equation*}
Modern analysis often describes a regular solution to this system by the abstract
differential equation
\begin{equation}
  \label{eq:abstract_formaulation} 
  \uv(t)= e^{-\nu t \Delta} F({\uv_0}) + \int_0^t e^{-\nu
    (t-t') \Delta} P B(\uv(t'), \uv(t')) \,dt',
\end{equation}
$P$ being the Leray's projector on $L^2$ divergence-free vector fields, where, given any
vector field $V=(V_1, V_2, V_3)$, then
$$
 P V = (P V_1, P V_2, P V_3),\quad\text{where}\quad P V_i=V_i-\Delta^{-1} \p_i \p_j V_j.
$$
This abstract formulation has been extensively exploited in the case of the Navier-Stokes
equations, for which $B(\uv, \uv) = (\uv \cdot \g) \, \uv$, $F(\uv_0) = \uv_0$, probably
starting from the famous paper by Fujita and Kato~\cite{FK64}. However, according to
Tao~\cite{TT16}, it seems that this approach has reached its limits. Unfortunately, we
have not found and alternate formulation that will revolutionize the field, yet.

On the contrary, we will be very old-fashioned in using the Oseen representation formula
which gives an explicit expression by convolutions in space of the integral
relation~(\ref{eq:abstract_formaulation}) through a ``semi singular" kernel. This kernel
was first determined by Oseen~\cite{CO11} for the evolutionary Stokes problem, and
developed by Leray~\cite{Ler1934} to study the Navier-Stokes equations. We introduce in
what follows the Oseen's kernel.

Let us consider the evolutionary Stokes problem with a continuous source term ${\bf f}$
and a continuous initial data $\vv_0$:
\begin{equation} 
  \label{eq:Stokes_Problem} 
  \left \{
    \begin{array}{ll}
      \p_t \vv - \nu
      \Delta \vv + \g q = {\bf f}& \quad \hbox{in } \R^+\times\R^3, 
      \\
      \div \, \vv = 0& \quad
      \hbox{in } \R^+\times\R^3, 
      \\
      \vv_{t=0} = \vv_0& \quad \hbox{in } \R^3.
    \end{array} \right.
\end{equation} 
It is well-known (see Oseen~\cite{CO11, CWO27}) that there exists a tensor ${\bf T}=(T_{i
  j})_{1 \le i, j \le 3}$ such that, being $(\vv, q)$ a regular solution
of~(\ref{eq:Stokes_Problem}), then
\begin{equation*} 
  \vv (t, \x) = (Q \star \vv_0) (t, \x) + \int_0^t \inte {\bf T} (t-t', \x
  - \yv) \cdot {\bf f}(t', \yv) \, d \yv,
\end{equation*}
where
\begin{equation*}
  Q(t, \x) := {1 \over (4 \pi \nu t)^{3/2} }e^{- {| \x |^2 \over 4 \nu t}},
\end{equation*} 
is the heat kernel and
\begin{equation*} 
  (Q \star \vv_0) (t, \x) := \inte Q(t, \x - \yv ) \vv_0(\yv) d\yv.
\end{equation*}
The components of ${\bf T}$ are detailed for
instance in~\cite{RL17}, where the following estimates are proved:
\begin{equation*}
  \forall\, m \ge 0,\,
  \exists C_{m}>0: \quad | D^m {\bf T}(t, \x ) | \le {C_m
    \over (| \x |^2 + \nu t)^{m+3 \over 2}}\qquad\forall\,(t,\x)\not=(0,{\bf 0}),
\end{equation*}
$C_m$ being some constant depending only on $m\in\N$. As a consequence we have (see for
instance~\cite{RL17}) the following result.
\begin{lemma} 
  Let $ t > 0$. Then, $\forall \, t' \in [0, t[$, the tensor field $\x' \mapsto {\bf T} (t-t',
  \x' )$ belongs to $L^1_{\x'}(\R^3)$. Moreover, $t' \mapsto \| \g \, {\bf T} (t-t', \cdot )
  \|_{0, 1} \in L^1([0, t])$ and
\begin{equation}
  \label{eq:calcul_integral}
  \| \g \, {\bf T} (t-t', \cdot ) \|_{0, 1} \le { C \over
    \sqrt{\nu (t-t')} } .
\end{equation}
\end{lemma}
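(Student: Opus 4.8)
The plan is to reduce every assertion to the pointwise kernel bounds $|D^m\mathbf{T}(s,\x)|\le C_m(|\x|^2+\nu s)^{-(m+3)/2}$ recalled just above, written with $s:=t-t'>0$ for a fixed $t'\in[0,t[$. These bounds are homogeneous under the parabolic rescaling $\x\mapsto\sqrt{\nu s}\,\x$, so each spatial integral can be computed by passing to polar coordinates and rescaling in $r=|\x|$; this isolates the exact power of $s$ and leaves a dimensionless convergent integral. Note also that for every fixed $s>0$ the field $\mathbf{T}(s,\cdot)$ is continuous, so integrability over compact sets is automatic and the only real question in each case is the behaviour of the tail.

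First I would establish the gradient estimate~(\ref{eq:calcul_integral}), which carries the substantive content. Taking $m=1$ gives $|\g\mathbf{T}(s,\x)|\le C_1(|\x|^2+\nu s)^{-2}$, whose decay is $|\x|^{-4}$: integrable at infinity and, for $s>0$, bounded near the origin. Writing the integral in polar coordinates and substituting $r=\sqrt{\nu s}\,\rho$ yields
\[
\|\g\mathbf{T}(s,\cdot)\|_{0,1}\le C_1\,4\pi\int_0^\infty\frac{r^2\,dr}{(r^2+\nu s)^2}=\frac{4\pi C_1}{\sqrt{\nu s}}\int_0^\infty\frac{\rho^2\,d\rho}{(\rho^2+1)^2}=\frac{C}{\sqrt{\nu s}},
\]
since $\int_0^\infty\rho^2(\rho^2+1)^{-2}\,d\rho=\pi/4$. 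With $s=t-t'$ this is exactly~(\ref{eq:calcul_integral}). The $L^1$-in-time statement is then immediate, because the resulting bound is integrable at the endpoint $t'=t$:
\[
\int_0^t\|\g\mathbf{T}(t-t',\cdot)\|_{0,1}\,dt'\le\frac{C}{\sqrt\nu}\int_0^t\frac{dt'}{\sqrt{t-t'}}=\frac{2C}{\sqrt\nu}\sqrt t<\infty,
\]
so $t'\mapsto\|\g\mathbf{T}(t-t',\cdot)\|_{0,1}\in L^1([0,t])$.

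For the bare membership $\mathbf{T}(s,\cdot)\in L^1(\R^3)$ I would argue in the same spirit, but this is the step I expect to be the main obstacle. The $m=0$ bound only gives the \emph{borderline} decay $|\mathbf{T}(s,\x)|\le C_0(|\x|^2+\nu s)^{-3/2}=\mathcal O(|\x|^{-3})$, and the corresponding radial integral $\int_0^\infty r^2(r^2+\nu s)^{-3/2}\,dr$ diverges logarithmically (the integrand is $\sim 1/r$ at infinity), so the pointwise size estimate is by itself \emph{not} enough to place $\mathbf{T}(s,\cdot)$ in $L^1$. To handle this point one cannot use the magnitude alone but must exploit the explicit structure of the Oseen kernel, writing $T_{ij}(s,\x)=\delta_{ij}\,Q(s,\x)+\partial_{x_i}\partial_{x_j}g(s,\x)$, where $g$ is the Newtonian potential of the heat kernel $Q(s,\cdot)$: the Gaussian term is manifestly integrable with super-exponential decay, and the entire difficulty is concentrated in the far-field analysis of the pressure-correction term $\partial_{x_i}\partial_{x_j}g$, whose leading monopole contribution decays precisely like $|\x|^{-3}$. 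This delicate tail is where I would focus the analysis; it is also why the estimate genuinely invoked downstream is the gradient bound~(\ref{eq:calcul_integral}) — the nonlinearity enters in divergence form $\div(\overline{\uv\otimes\uv})$, so an integration by parts transfers the derivative onto $\mathbf{T}$ and only $\g\mathbf{T}$, with its harmless $|\x|^{-4}$ decay, is ever needed.
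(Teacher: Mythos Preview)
The paper does not supply a proof of this lemma; it is stated as a consequence of the pointwise kernel bounds and the reader is referred to~\cite{RL17}. Your derivation of the gradient estimate~(\ref{eq:calcul_integral}) from the $m=1$ bound via parabolic rescaling is the standard computation and is correct, and the time-integrability conclusion follows exactly as you say.

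Your instinct about the bare $L^1$ membership of $\mathbf{T}(s,\cdot)$ is well-founded --- in fact the claim is false as stated, so the gap you flag cannot be filled. A clean way to see this: the spatial Fourier transform of the Oseen tensor is $\widehat{T}_{ij}(s,\xi)=(\delta_{ij}-\xi_i\xi_j/|\xi|^2)\,e^{-\nu s|\xi|^2}$, and the Leray symbol $\delta_{ij}-\xi_i\xi_j/|\xi|^2$ is discontinuous at $\xi=0$; by Riemann--Lebesgue, $T_{ij}(s,\cdot)$ therefore cannot belong to $L^1(\R^3)$. In real space this is precisely the phenomenon you isolated: the pressure-correction piece $\partial_i\partial_j g$ is a Calder\'on--Zygmund kernel with $|\x|^{-3}$ decay and mean zero on spheres, so $\int|\partial_i\partial_j g|\,d\x$ diverges logarithmically even though the principal-value integral exists. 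As you correctly observe, this has no bearing on the rest of the paper, since every downstream use of the kernel passes through $\g\mathbf{T}$ via the divergence-form nonlinearity, and the $|\x|^{-4}$ decay of $\g\mathbf{T}$ is honestly integrable.
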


\medskip

Lemma~8 in~\cite{Ler1934} applies also to the case of the NSEB-$\alpha$ model (we skip the
details), and we have also the following result
\begin{lemma}
  Let $(\uv,p)$ be a regular solution of the NSEB-$\alpha$ model~(\ref{eq:bardina}) over
  the time interval $[0, T[$, then for all $t \in [0, T[$,
  \begin{eqnarray}
    \label{eq:oseen_rep} 
    && \uv (t, \x) = (Q \, \star \, \overline{ \uv_0}) (t, \x) + \int_0^t
    \inte \g {\bf T} (t-t', \x - \yv) : \overline{\uv \otimes \uv} (t', \yv)\, d \yv dt',
    \\
    \label{eq:pression_reg}
    && p (t, \x) = {1 \over 4 \pi} \int_{\R^3} \g^2 \left ( {1 \over
        r} \right )\overline{\uv \otimes \uv}(t, \yv) \, d\yv,
  \end{eqnarray} 
  where $r := | \x - \yv |$.
\end{lemma}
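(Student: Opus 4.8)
The plan is to establish the two representation formulas \eqref{eq:oseen_rep} and \eqref{eq:pression_reg} separately, treating the pressure formula first since it is a stationary (elliptic) computation at each fixed time, and then the velocity formula as a consequence of the Oseen theory applied to the Stokes problem with a suitable forcing term. For the pressure, since $(\uv,p)$ is a regular solution, at each fixed $t \in \,]0,T[$ the pressure satisfies the Poisson equation \eqref{eq:pression}, namely $\Delta p = -\div[\div(\overline{\uv\otimes\uv})]$. The idea is to convolve the right-hand side against the Newtonian potential $\Phi(\x)=-\tfrac{1}{4\pi}\tfrac{1}{|\x|}$, whose Laplacian is $\delta_0$, and then move the two divergences off the tensor $\overline{\uv\otimes\uv}$ and onto the kernel by integrating by parts twice, using the Green-type formula \eqref{eq:green}. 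This formally yields $p(t,\x)=\tfrac{1}{4\pi}\int_{\R^3}\g^2\!\left(\tfrac1r\right)\overline{\uv\otimes\uv}(t,\yv)\,d\yv$, where $\g^2(1/r)$ is the Hessian acting on the singular kernel. The justification of the double integration by parts requires controlling boundary terms at infinity and at the singularity $r=0$, which I address below.

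First I would record the regularity available from Definition~\ref{def:regular}: since $\uv \in C([0,\tau];H^2(\R^3)^3)$ for every $\tau<T$ and $\uv,\g\uv$ are continuous and bounded (via Sobolev embedding $H^2(\R^3)\hookrightarrow C^0_b\cap L^\infty$ in dimension three), the product $\uv\otimes\uv$ lies in $H^2(\R^3)$ at each fixed $t$, and hence so does its filtered version $\overline{\uv\otimes\uv}$ by the elliptic gain \eqref{eq:est_der_sob_gen}; this ensures the tensor has enough decay and smoothness for the manipulations to make sense. For the velocity representation, the strategy is to regard \eqref{eq:bardina}.i as an instance of the Stokes problem \eqref{eq:Stokes_Problem} with forcing $\mathbf f = -\div(\overline{\uv\otimes\uv})$ and initial datum $\vv_0=\overline{\uv_0}$. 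Applying the Oseen representation quoted just before the lemma gives $\uv(t,\x)=(Q\star\overline{\uv_0})(t,\x)+\int_0^t\!\!\int_{\R^3}\mathbf T(t-t',\x-\yv)\cdot\mathbf f(t',\yv)\,d\yv\,dt'$, and then I would integrate the spatial divergence in $\mathbf f$ by parts against $\mathbf T$ to transfer it to $\g\mathbf T$, producing the contraction $\g\mathbf T(t-t',\x-\yv):\overline{\uv\otimes\uv}(t',\yv)$ displayed in \eqref{eq:oseen_rep}. The integrability needed to run Fubini and to make the integration by parts legitimate is exactly what the preceding lemma supplies: $\mathbf T(t-t',\cdot)\in L^1$ and $t'\mapsto\|\g\mathbf T(t-t',\cdot)\|_{0,1}$ is integrable on $[0,t]$ with the bound \eqref{eq:calcul_integral}.

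The main obstacle I expect is the careful treatment of the singular kernels when integrating by parts: both $\g^2(1/r)$ (strongly singular, order $|\x-\yv|^{-3}$, hence only a principal-value/distributional object, not absolutely integrable near the singularity) in the pressure formula and the transfer of derivatives onto the ``semi-singular'' Oseen tensor $\mathbf T$ require excising a small ball $B_\varepsilon(\x)$ around the singularity, handling the resulting surface integrals on $\partial B_\varepsilon$, and passing to the limit $\varepsilon\to0^+$. For the Hessian of the Newtonian potential this limit produces the correct principal-value integral together with a possible local (Calderón--Zygmund) correction term; one must check that the trace condition $\div\,\uv=0$ kills the would-be local term so that the clean formula \eqref{eq:pression_reg} survives. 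The remaining steps — verifying decay at infinity so the far-field boundary terms vanish, and confirming that the kernel identities $\Delta(1/r)=-4\pi\delta_0$ and $-\alpha^2\Delta H_\alpha+H_\alpha=\delta_0$ interact correctly with the filtering — are routine given the established $H^2$-regularity and the estimates from Section~\ref{sec:helmholz_filter}. Since the statement itself explicitly invokes Lemma~8 of Leray and instructs us that ``we skip the details,'' I would present the argument at the level of indicating these regularizations and citing \cite{Ler1934} and \cite{RL17} for the technical convergence of the excised integrals.
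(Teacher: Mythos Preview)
Your proposal is correct and follows exactly the route the paper intends: the paper offers no proof at all beyond the sentence ``Lemma~8 in~\cite{Ler1934} applies also to the case of the NSEB-$\alpha$ model (we skip the details)'', and your outline---treat~(\ref{eq:bardina}.i) as a Stokes problem with forcing $-\div(\overline{\uv\otimes\uv})$, invoke the Oseen kernel representation, shift the divergence onto $\mathbf T$ by parts, and handle the pressure via the Newtonian potential and Calder\'on--Zygmund theory---is precisely the content of that citation. If anything, you have supplied more justification (the $H^2$-algebra property, the singular-kernel excision, the role of $\div\,\uv=0$) than the authors themselves.
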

In formula~(\ref{eq:pression_reg}), $\g^2 \left ( {1 \over r} \right ) $ is a Dirac tensor
so that the integral ~(\ref{eq:pression_reg}) must be understood as a singular operator,
which satisfies the assumptions of the Calder\'on-Zygmund Theorem (see Stein~\cite{ES70}
for a general setting and Galdi~\cite{GG00} for the implementation within the
Navier-Stokes equations framework).
\subsection{Regularity and energy balance}
Recall that the notion of regular solution is given in Definition~\ref{def:regular}.  The
goal is to prove that any regular solution satisfies the energy balance.  We know
that a regular solution has $H^2$-regularity, which is {\sl \`a priori} not enough to get
this energy balance. We also need extra integrability conditions for $\p_t \uv$ and $\g
p$, which is the main goal of this section. We will observe that the model exerts a strong
regularizing effect of the pressure, even near $t=0$.
\begin{lemma}
  \label{lem:pression_regularity}
  For all $0 < \tau <T$,
  \begin{equation} 
    \label{eq:reg_pressure}
    p \in C([0, \tau]; H^4(\R^3)) 
  \end{equation}
\end{lemma}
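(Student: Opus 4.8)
The goal is to establish that $p \in C([0,\tau]; H^4(\mathbb{R}^3))$ for any regular solution, and the key is the pressure formula~\eqref{eq:pression_reg}, which expresses $p(t,\cdot)$ as a Calderón–Zygmund singular integral applied to the filtered tensor $\overline{\uv \otimes \uv}(t,\cdot)$. The plan is therefore to reduce the $H^4$-regularity of $p$ entirely to the $H^4$-regularity of the source $\overline{\uv \otimes \uv}$, and then to extract the needed regularity of this source from the special smoothing properties of the Helmholtz filter, exploiting that only an $H^2$ bound on $\uv$ is available.

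First I would exploit the Calderón–Zygmund structure of~\eqref{eq:pression_reg}. The operator $\psi \mapsto \tfrac{1}{4\pi}\int_{\mathbb{R}^3} \nabla^2(1/r)\,\psi(\yv)\,d\yv$ maps $L^2(\mathbb{R}^3)$ boundedly to $L^2(\mathbb{R}^3)$, and since it is a convolution with a kernel homogeneous of degree $-3$, it commutes with derivatives. Consequently, for any multi-index with $|\beta| = s$, one has $D^\beta p = \tfrac{1}{4\pi}\int \nabla^2(1/r)\,D^\beta(\overline{\uv \otimes \uv})\,d\yv$, so the Calderón–Zygmund estimate yields the clean bound
\begin{equation*}
  \| p(t,\cdot) \|_{s,2} \le C \, \| \overline{\uv \otimes \uv}(t,\cdot) \|_{s,2},
  \qquad s = 0,1,2,3,4.
\end{equation*}
Thus it suffices to control $\| \overline{\uv \otimes \uv}(t,\cdot) \|_{4,2}$ in terms of the regular-solution data. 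Here the Helmholtz filter does the essential work: by the gain-of-two-derivatives estimate~\eqref{eq:est_der_sob_gen}, $\| \overline{\uv \otimes \uv} \|_{s+2,2} \le (C/\alpha)\,\| \uv \otimes \uv \|_{s,2}$, so I only need to bound $\| \uv \otimes \uv(t,\cdot) \|_{2,2}$ by the $H^2$ norm of $\uv$.

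The next step is therefore to estimate the unfiltered product $\uv \otimes \uv$ in $H^2$. Since a regular solution satisfies $\uv \in C([0,\tau]; H^2(\mathbb{R}^3)^3)$ and $H^2(\mathbb{R}^3) \hookrightarrow L^\infty(\mathbb{R}^3)$ in dimension three, one controls $\uv \otimes \uv$ in $H^2$ by a Gagliardo–Nirenberg/Moser-type product estimate: schematically, $\| \uv \otimes \uv \|_{2,2} \le C\,\| \uv \|_{L^\infty}\| \uv \|_{2,2} \le C\,\| \uv \|_{2,2}^2$, where the $L^\infty$ bound itself follows from the Sobolev embedding applied to $\uv \in H^2$. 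Combining this with~\eqref{eq:est_der_sob_gen} at $s=2$ and the Calderón–Zygmund bound at $s=4$ gives
\begin{equation*}
  \| p(t,\cdot) \|_{4,2} \le C\,\| \overline{\uv \otimes \uv}(t,\cdot) \|_{4,2}
  \le \frac{C}{\alpha}\,\| \uv \otimes \uv(t,\cdot) \|_{2,2}
  \le \frac{C}{\alpha}\,\| \uv(t,\cdot) \|_{2,2}^2,
\end{equation*}
which is finite and bounded on $[0,\tau]$ because $\sup_{t \in [0,\tau]} \| \uv(t,\cdot) \|_{2,2} < \infty$ by the definition of a regular solution. This establishes the $H^4$ bound at each time.

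Finally, to upgrade the pointwise-in-time $H^4$ bound to genuine continuity $p \in C([0,\tau]; H^4)$, I would use that the map $\uv \mapsto \overline{\uv \otimes \uv}$ is continuous from $H^2$ to $H^4$ together with the continuity of the Calderón–Zygmund operator from $H^4$ to $H^4$. Concretely, for $t, t' \in [0,\tau]$ one writes $\uv(t) \otimes \uv(t) - \uv(t') \otimes \uv(t') = (\uv(t) - \uv(t')) \otimes \uv(t) + \uv(t') \otimes (\uv(t) - \uv(t'))$, applies the bilinear $H^2$ product estimate to each term, and uses $t \mapsto \uv(t,\cdot) \in C([0,\tau]; H^2)$ to conclude that $t \mapsto \overline{\uv \otimes \uv}(t,\cdot)$ is continuous into $H^4$; composing with the bounded linear Calderón–Zygmund operator then gives continuity of $t \mapsto p(t,\cdot)$ into $H^4$. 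I expect the main obstacle to be purely technical rather than conceptual: one must justify that the singular integral~\eqref{eq:pression_reg} may indeed be differentiated up to fourth order under the Calderón–Zygmund framework and that this commutes with differentiation of the source, i.e.\ carefully invoking the Calderón–Zygmund theorem (as cited via Stein~\cite{ES70} and Galdi~\cite{GG00}) rather than manipulating the singular kernel directly. The filter estimate~\eqref{eq:est_der_sob_gen}, which furnishes the crucial two extra derivatives, is what makes the otherwise only-$H^2$ velocity enough to reach $H^4$ for the pressure.
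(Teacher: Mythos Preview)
Your proof is correct and shares with the paper the essential idea: the Helmholtz filter lifts $\uv\otimes\uv\in H^2$ (using that $H^2(\R^3)$ is an algebra) to $\overline{\uv\otimes\uv}\in H^4$ via the gain-of-two estimate~\eqref{eq:est_der_sob_gen}, and this $H^4$ source then yields $p\in H^4$. Your treatment of time continuity is likewise the same in spirit.

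Where you diverge is in the passage from the $H^4$ source to the $H^4$ pressure. You go directly: the Calder\'on--Zygmund operator in~\eqref{eq:pression_reg} (essentially a composition of Riesz transforms) commutes with derivatives and is bounded on $L^2$, hence on every $H^s$, giving $\|p\|_{4,2}\le C\|\overline{\uv\otimes\uv}\|_{4,2}$ in one stroke. The paper instead invokes Calder\'on--Zygmund only at the $L^2$ level to get $p\in L^2$, and then bootstraps by rewriting the Poisson equation as $-\Delta p + p = \div[\div(\overline{\uv\otimes\uv})] + p$ and applying standard elliptic regularity twice ($L^2\to H^2\to H^4$). Your route is more economical; the paper's route trades that economy for invoking the singular-integral machinery only in its most classical form and relying on textbook elliptic theory for the higher derivatives. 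Both reach the same bound $\|p(t,\cdot)\|_{4,2}\le C_\alpha\|\uv(t,\cdot)\|_{2,2}^2$.
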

\begin{proof} 
  Here $\tau \in \, ]0, T[$ and $t \in [0, \tau]$, and we split the proof into 3 steps:
\begin{enumerate} [i)]
\item \label{it:step1} $H^4$ regularity of $\overline{\uv \otimes \uv }$ from the
  Helmholtz equation;
\item \label{it:step2} $L^2$ regularity of $p$ by using the integral
  representation~(\ref{eq:pression_reg}) and the Calder\'on-Zygmund Theorem;
\item \label{it:step3} $H^4$ regularity of $p$ by using the equation~(\ref{eq:pression})
  and the elliptic theory.
\end{enumerate} Step~\ref{it:step1}): Since $H^2(\R^{3})$ is an algebra and $\uv \in C([0,\tau];
H^2(\R^3)^3)$, then $\uv \otimes \uv \in C([0,\tau]; H^2(\R^3)^9)$. Therefore,
by~(\ref{eq:est_der_sob_gen}), it follows that $\overline{\uv \otimes \uv} \in C([0,\tau];
H^4(\R^3)^9)$ and we have
\begin{equation}
  \label{eq:H4_regularity}
  \| \overline{\uv \otimes \uv}(t, \cdot) \|_{4,2} \le {C \over
    \alpha^2}\|\uv (t, \cdot) \|_{2,2}^2.
\end{equation}
We also deduce $ \div[ \div ( \overline{\uv \otimes \uv } )] \in C([0,\tau];
H^2(\R^3)^9)$, which will be useful in step~\ref{it:step3}).

\medskip

Step~\ref{it:step2}): The $L^2$-regularity of $p$ is a consequence of the integral
representation~(\ref{eq:pression_reg}), Calder\'on-Zygmund Theorem,
and~(\ref{eq:H4_regularity}). The time continuity with values in $L^2(\R^3)$ is
straightforward, and we have in particular
\begin{equation*}
  \| p (t, \cdot) \|_{0, 2}\le {C \over \alpha^2} \|\uv (t, \cdot)\|_{2,2}^2.
\end{equation*}
Step~\ref{it:step3}): We use the equation~(\ref{eq:pression}) for the pressure, that we
write under the form
\begin{equation}
  \label{eq:pression_2}
  -\Delta p + p = F ,
\end{equation} where
\begin{equation} 
\label{eq:F_pour_la_pression}
F = \div[ \div ( \overline{\uv \otimes \uv
} )] + p \in C([0, \tau]; L^2(\R^3)), 
\end{equation}
by the results of the two previous steps. Then~(\ref{eq:F_pour_la_pression}) is a
consequence of the standard elliptic theory (see Br\'ezis~\cite{HB83}) and yields $p \in
C([0,\tau]; H^2(\R^3))$. Finally as $\div[ \div ( \overline{\uv \otimes \uv } )] \in
C([0,\tau]; H^2(\R^3)^9)$ by step~\ref{it:step1}), then $F$ given
by~(\ref{eq:F_pour_la_pression}) is in $C([0,\tau]; H^2(\R^3)^9)$, hence $p \in
C([0,\tau]; H^4(\R^3)^9)$ from~(\ref{eq:pression_2}) and we have in particular
\begin{equation*} 
  \|p(t, \cdot)\|_{4, 2}\le {C \over \alpha^2 } \| \uv (t, \cdot) \|_{2,2}^2,
\end{equation*} 
which is optimal.
\end{proof} 
We have also the following result.
 \BL
 \label{lem:velocity_regularity} For all $0 < \tau_1< \tau_2 < \infty$, $\uv \in C
 ([\tau_1, \tau_2]; H^4(\R^3)^3)$.  
\EL
\begin{proof} 
  Let $m= 3, 4$, $0 < \tau_1 < \tau_2 < T$. An argument similar to that of Lemma~3.2
  in~\cite{RL17} shows that we can differentiate under the integral sign in the Oseen's
  representation and therefore we skip the details. We get
  $$ 
  D^m \uv (t, \x) = D^{m-2} (Q \, \star \, D^2\overline{ \uv_0}) (t, \x) + \int_0^t \inte \g
  {\bf T} (t-t', \x - \yv) : D^m (\overline{\uv \otimes \uv}) (t', \yv)\, d \yv dt',
  $$
  hence by standard results about the heat kernel, Cauchy-Schwarz and Young inequalities,
  inequalities~(\ref{eq:calcul_integral}) and~(\ref{eq:H4_regularity}), and the fact that
  $H^2(\R^{3})$ is an algebra, we obtain
 \begin{equation*} 
   \| D^m \uv (t, \x) \|_{0,2} \le C \left ( {1 \over (\nu t)^{m-2 \over
         2} } \| (Q \star D^2 \overline {\uv_0})(t, \cdot)\|_{0,2} + {1 \over \alpha^2 } \int_0^t {
       \| \uv (t', \cdot) \|_{2,2} ^2 \over \sqrt { \nu (t-t') }\,dt' } \right ),
\end{equation*} 
which gives
\begin{equation*}
   \| D^m \uv (t, \x) \|_{0,2} \le C \left ( {1 \over \alpha (\nu
       \tau_1)^{m-2 \over 2} } \| \uv_0 \|_{0,2} + {1 \over \alpha^2 } \sup_{t' \in [0, \tau_2]
     }\| \uv(t', \cdot) \|_{2,2}^2 \sqrt { \nu t \over \nu } \right ),
 \end{equation*} 
 hence the result, the continuity being a consequence of standard results in analysis.
\end{proof}
We have some relevant corollaries of the previous results
\begin{corollary} 
  For all $0 <\tau<T$, $ \p_t\uv\in C([0, \tau]; L^2(\R^3)^3)$, and for all
  $0<\tau_1<\tau_2<T$, it follows that $\p_t \uv \in C ([\tau_1, \tau_2]; H^2(\R^3)^3)$.
\end{corollary}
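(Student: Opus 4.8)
The plan is to read off $\p_t \uv$ directly from the momentum equation~(\ref{eq:bardina}.i), which, since we are working with a regular solution, may be rearranged pointwise in $(t,\x)$ as
\[
  \p_t \uv = \nu \Delta \uv - \div\,(\overline{\uv \otimes \uv}) - \g p,
\]
and then to track the spatial regularity of each of the three terms on the right using the results already established. No new analysis is needed: both assertions follow by matching the regularity we already have for $\uv$, for $\overline{\uv \otimes \uv}$, and for $p$ against the target Sobolev space.

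For the first claim, on $[0,\tau]$, I would argue as follows. By Definition~\ref{def:regular} we have $\uv \in C([0,\tau]; H^2(\R^3)^3)$, so $\Delta \uv \in C([0,\tau]; L^2(\R^3)^3)$. By step~i) of the proof of Lemma~\ref{lem:pression_regularity} (estimate~(\ref{eq:H4_regularity})) we have $\overline{\uv \otimes \uv} \in C([0,\tau]; H^4(\R^3)^9)$, whence $\div\,(\overline{\uv \otimes \uv}) \in C([0,\tau]; H^3(\R^3)^3) \hookrightarrow C([0,\tau]; L^2(\R^3)^3)$. Finally, Lemma~\ref{lem:pression_regularity} gives $p \in C([0,\tau]; H^4(\R^3))$, hence $\g p \in C([0,\tau]; H^3(\R^3)^3) \hookrightarrow C([0,\tau]; L^2(\R^3)^3)$. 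Summing the three contributions yields $\p_t \uv \in C([0,\tau]; L^2(\R^3)^3)$.

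For the second claim, on an interval $[\tau_1,\tau_2]$ with $\tau_1>0$, the only change is that Lemma~\ref{lem:velocity_regularity} upgrades the velocity to $\uv \in C([\tau_1,\tau_2]; H^4(\R^3)^3)$, so that now $\Delta \uv \in C([\tau_1,\tau_2]; H^2(\R^3)^3)$; the other two terms already sit, uniformly on $[0,\tau_2]$, in a space strictly better than $H^2$, namely $H^3$, so that restricting them to $[\tau_1,\tau_2]$ and using the embedding $H^3 \hookrightarrow H^2$ keeps them in $C([\tau_1,\tau_2]; H^2(\R^3)^3)$. Adding up gives $\p_t \uv \in C([\tau_1,\tau_2]; H^2(\R^3)^3)$.

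There is no real obstacle here: the corollary is a bookkeeping consequence of the equation together with the two preceding lemmas. The only point deserving attention is the asymmetry between the two statements, namely that the $L^2$-valued continuity holds down to $t=0$ while the $H^2$-valued one requires $\tau_1>0$. This is dictated precisely by the fact that near the initial time the diffusion term $\Delta \uv$ is controlled only in $L^2$ (where the solution is merely $H^2$), whereas the parabolic smoothing furnished by Lemma~\ref{lem:velocity_regularity}, which supplies the extra two derivatives, degenerates as $\tau_1 \to 0^+$ through the factor $(\nu \tau_1)^{-(m-2)/2}$.
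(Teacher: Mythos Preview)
Your proof is correct and follows exactly the same approach as the paper: rewrite the equation as $\p_t \uv = -\g p + \nu \Delta \uv - \div(\overline{\uv\otimes\uv})$ and read off the regularity of each term from Lemmas~\ref{lem:pression_regularity} and~\ref{lem:velocity_regularity}. The paper's own proof is just the one-line version of what you wrote out in detail.
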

\begin{proof} 
  It is enough to write
\begin{equation*}
\p_t \uv = - \g p + \nu \Delta \uv - \div (\overline{\uv \otimes \uv}),
\end{equation*}
and to apply the previous results to the right-hand side.
\end{proof}
\begin{corollary}
  \label{cor:energy_balance} 
  Let $\uv$ be a regular solution to the NSEB-$\alpha$ model. Then, the velocity $\uv$
  satisfies the energy balance~(\ref{eq:energy_estimate_Bardina_2}).
\end{corollary}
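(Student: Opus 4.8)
The plan is to recover the balance~(\ref{eq:energy_estimate_Bardina_2}) by testing the momentum equation~(\ref{eq:bardina}.i) with the field $A\uv := \uv - \alpha^2\Delta\uv$, which is precisely the combination that reconstructs the energy $E_\alpha$. For a fixed $t\in\,]0,T[$ I would pair each term of the equation with $A\uv$ in $L^2(\R^3)$. By Lemma~\ref{lem:velocity_regularity} and the Corollary on $\p_t\uv$, on every compact subinterval $[\tau_1,\tau_2]\subset\,]0,T[$ one has $\uv\in C([\tau_1,\tau_2];H^4(\R^3)^3)$ and $\p_t\uv\in C([\tau_1,\tau_2];H^2(\R^3)^3)$, so all the pairings below are finite and every integration by parts is licit, no boundary terms surviving because the integrands and their derivatives decay at infinity.

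With this regularity in hand, the time-derivative term gives, using $\p_t\uv\in C^1$ into $H^1$, $(\p_t\uv, A\uv) = (\p_t\uv,\uv) + \alpha^2(\g\p_t\uv,\g\uv) = \frac{1}{2}\frac{d}{dt}E_\alpha(t)$, while the viscous term gives $(-\nu\Delta\uv, A\uv) = \nu J(t)^2 + \nu\alpha^2\|\Delta\uv\|_{0,2}^2$; these are exactly the contributions appearing in~(\ref{eq:energy_estimate_Bardina_2}). The pressure term vanishes entirely: integrating by parts and using $\div\uv = 0$, one checks both $(\g p,\uv) = 0$ and $(\g p,\Delta\uv) = 0$. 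The essential cancellation is that of the nonlinear term. Since convolution with $H_\alpha$ commutes with differentiation, $\div(\overline{\uv\otimes\uv}) = \overline{\div(\uv\otimes\uv)}$; then, using that $A = I - \alpha^2\Delta$ is self-adjoint on $L^2(\R^3)$ together with the Helmholtz identity $A\,\overline\phi = \phi$, I would write
$$(\div(\overline{\uv\otimes\uv}),A\uv) = (A\,\overline{\div(\uv\otimes\uv)},\uv) = (\div(\uv\otimes\uv),\uv) = ((\uv\cdot\g)\uv,\uv) = 0,$$
the last equality following again from $\div\uv=0$. This is the structural feature of the Bardina model that turns the balance into an equality rather than an inequality.

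Collecting the four contributions yields, for every $t\in\,]0,T[$, the pointwise differential identity
$$\frac{1}{2}\frac{d}{dt}E_\alpha(t) + \nu\alpha^2\|\Delta\uv(t,\cdot)\|_{0,2}^2 + \nu J(t)^2 = 0.$$
I would then integrate this over $[\varepsilon,t]$ and let $\varepsilon\to0^+$. Since $\uv\in C([0,\tau];H^2(\R^3)^3)$, the map $t\mapsto E_\alpha(t)$ is continuous up to $t=0$ with $E_\alpha(0)=E_{\alpha,0}$ (because $\uv(0,\cdot)=\overline{\uv_0}$), and the dissipation integrand is continuous, hence integrable, on $[0,t]$; passing to the limit then gives exactly~(\ref{eq:energy_estimate_Bardina_2}). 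I expect the main obstacle to be this passage to $t=0$: the enhanced regularity of Lemma~\ref{lem:velocity_regularity} degenerates as $\tau_1\to0^+$, so the differential identity can only be established on the open interval $\,]0,T[$, and one must lean on the $H^2$-continuity up to the origin, rather than on any differentiability there, to close the argument.
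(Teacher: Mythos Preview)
Your proof is correct and follows essentially the same approach as the paper: both test the equation against $A\uv=\uv-\alpha^2\Delta\uv$, exploit the self-adjointness that links the bar operator and $A$ together with the Helmholtz identity to kill the nonlinear term, and rely on the enhanced interior regularity (Lemma~\ref{lem:velocity_regularity} and its corollary on $\partial_t\uv$) to justify the pairings on $]0,T[$. Your treatment of the passage to $t=0$ via integrating on $[\varepsilon,t]$ and using the $C([0,\tau];H^2)$ continuity of $\uv$ is in fact more explicit than the paper's, which simply declares that ``the rest of the proof is now straightforward.''
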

\begin{proof} 
  Since for all $t \in [0, T[$ we have that $\uv (t, \cdot) \in H^2(\R^3)^3$, we can take
  $ -\alpha^2 \Delta \uv + \uv$ as test vector field in~(\ref{eq:bardina}) and integrate
  over $\R^3$ by using the Stokes formula. In particular, we have (as $H^2(\R^{3})$ is an
  algebra) that $(\uv \otimes\uv)(t, \cdot) \in H^2(\R^3)^9$, hence $(\overline {\uv
    \otimes \uv })(t, \cdot) \in H^4(\R^3)^9$. Therefore, since the bar operator is
  self-adjoint, the following equalities hold true
\begin{equation*}
  ( \div (\overline {\uv \otimes \uv }), -\alpha^2 \Delta \uv + \uv) = (
  \div ( {\uv \otimes \uv }), -\alpha^2 \Delta \um + \um) = ( \div ( {\uv \otimes \uv }),
  \uv) = 0.
\end{equation*} 
Moreover, let $t \in \, ]0, T[$. As $\uv, \p_t \uv \in C([0, t+\E]; L^2(\R^3)^3)$, where
$\E >0$ is such that $t+\E <T$, we obtain by applying a standard results (see
Temam~\cite{RT01} for instance),
\begin{equation*}
(\p_t \uv(t, \cdot) , \uv(t, \cdot) = {d \over 2 dt} \inte | \uv(t, \x)|^2 d \x.
\end{equation*}
By the previous results, $\p_t \uv \in C([\E, t+\E]; H^1(\R^3)^3)$, $\p_t \g \uv \in
C([\E, t+\E]; L^2(\R^3)^9)$, $\g \uv \in C([\E, t+\E]; L^2(\R^3)^9)$. Therefore, we also
have
\begin{equation*} 
  \forall\,t\in]0,T[\qquad   -(\p_t \uv (t, \cdot), \Delta \uv (t, \cdot) ) = {d \over 2 dt} \inte
  |\g \uv (t, \x) |^2 d \x.
\end{equation*} 
Finally, since $\div \,\uv = \div\, (\Delta \uv) = 0$, and due to the integrability
results of Lemma~\ref{lem:pression_regularity} and Lemma~\ref{lem:velocity_regularity}
about $(\uv,p)$ we also have $(\g p, -\alpha^2 \Delta \uv + \uv) = 0$. The rest of the
proof is now straightforward.
\end{proof}
\begin{Remark} 
  Once the regularity of any regular solution is established and the energy balance is
  checked, uniqueness follows from the same process, based on energy balances by the
  regularity results, combined with an application  of Gronwall's lemma. One has only to
  reproduce what is written in Section~2.2 of~\cite{LL06}, with minor modifications. To
  avoid repetitions, we skip the details.
\end{Remark}
\section{Construction of a regular solution}
In this section, we start with a continuation principle, that states that a regular
solution on a finite time interval $[0, T[$ (with $T<\infty$) cannot develop any
singularity when $t \to T^{-}$ and can be extended by continuity up to time $T$. Then, we
set up the standard iteration process based on the integral Oseen representation
formula. Finally, we will construct a regular solution, obtained as the limit of the
iterations previously analyzed.
\subsection{Continuation principle}
The regular solution that will be constructed by the Picard (iteration) theorem is
local-in-time.  The results in this subsection are essential to understand the transition
from a local-in-time regular solution, to a regular solution defined for all time
$t\in[0,\infty[$, that we call a global time solution.
\begin{lemma}
  \label{lem:regularity_vitesse}
  Assume $T<\infty$. Then a regular solution to~\eqref{eq:bardina} on $[0, T[$ do not
  develop a singularity when $t \to T^{-}$.
\end{lemma}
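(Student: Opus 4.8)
The plan is to prove that $\|\uv(t,\cdot)\|_{2,2}$ stays bounded as $t\to T^{-}$, which by the definition of ``becoming singular'' immediately excludes the blow-up $\lim_{t\to T^-}\|\uv(t,\cdot)\|_{2,2}=\infty$. Since any regular solution satisfies the energy balance~(\ref{eq:energy_estimate_Bardina_2}) by Corollary~\ref{cor:energy_balance}, we have $E_\alpha(t)\le E_{\alpha,0}$ for every $t\in[0,T[$; hence $W(t)\le E_{\alpha,0}$ and $J(t)^2\le\alpha^{-2}E_{\alpha,0}$ are bounded uniformly in $t$, so that
\begin{equation*}
\sup_{0\le t<T}\|\uv(t,\cdot)\|_{1,2}<\infty.
\end{equation*}
The whole matter therefore reduces to a uniform bound for the single remaining quantity $\|D^2\uv(t,\cdot)\|_{0,2}$.

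To obtain it I would start from the Oseen representation~(\ref{eq:oseen_rep}) and, exactly as in the proof of Lemma~\ref{lem:velocity_regularity}, differentiate twice under the integral sign, moving the two derivatives onto $\overline{\uv\otimes\uv}$:
\begin{equation*}
D^2\uv(t,\x)=(Q\star D^2\overline{\uv_0})(t,\x)+\int_0^t\inte\g{\bf T}(t-t',\x-\yv):D^2\overline{\uv\otimes\uv}(t',\yv)\,d\yv\,dt'.
\end{equation*}
For the first term the heat semigroup is an $L^2$-contraction, so $\|Q\star D^2\overline{\uv_0}\|_{0,2}\le\|\overline{\uv_0}\|_{2,2}<\infty$, since $\overline{\uv_0}\in H^2(\R^3)^3$. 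For the nonlinear term I would apply Young's convolution inequality together with the kernel bound~(\ref{eq:calcul_integral}) to get
\begin{equation*}
\Big\|\int_0^t\inte\g{\bf T}(t-t',\x-\yv):D^2\overline{\uv\otimes\uv}(t',\yv)\,d\yv\,dt'\Big\|_{0,2}\le C\int_0^t\frac{\|\overline{\uv\otimes\uv}(t',\cdot)\|_{2,2}}{\sqrt{\nu(t-t')}}\,dt'.
\end{equation*}

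The decisive step — and the one requiring care — is the estimate of $\|\overline{\uv\otimes\uv}(t',\cdot)\|_{2,2}$. Here I would use the sharp two-derivative gain of the Helmholtz filter~(\ref{eq:est_der_sec}) rather than~(\ref{eq:H4_regularity}), writing $\|\overline{\uv\otimes\uv}\|_{2,2}\le(C/\alpha)\|\uv\otimes\uv\|_{0,2}\le(C/\alpha)\|\uv\|_{0,4}^2$, and then invoking the Sobolev embedding $H^1(\R^3)\hookrightarrow L^4(\R^3)$ to obtain $\|\uv\otimes\uv\|_{0,2}\le C\|\uv(t',\cdot)\|_{1,2}^2$. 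Combined with the uniform $H^1$ bound from the energy balance, this yields a constant $M=M(\alpha,\nu,E_{\alpha,0})$ with $\|\overline{\uv\otimes\uv}(t',\cdot)\|_{2,2}\le M$ for all $t'<T$, so that $\int_0^t(t-t')^{-1/2}\,dt'=2\sqrt t\le 2\sqrt T$ gives
\begin{equation*}
\sup_{0\le t<T}\|D^2\uv(t,\cdot)\|_{0,2}\le\|\overline{\uv_0}\|_{2,2}+\frac{2CM\sqrt T}{\sqrt\nu}<\infty.
\end{equation*}
Together with the $H^1$ bound this forces $\sup_{0\le t<T}\|\uv(t,\cdot)\|_{2,2}<\infty$, and no singularity can form. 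The main obstacle to anticipate is exactly at this point: the naive route of estimating $\|\overline{\uv\otimes\uv}\|_{2,2}$ through~(\ref{eq:H4_regularity}) by $(C/\alpha^2)\|\uv\|_{2,2}^2$ produces a quadratic (Riccati-type) integral inequality in $\|\uv\|_{2,2}$ with a singular kernel, which only furnishes a local-in-time bound and cannot by itself exclude blow-up. The gain comes from trading one factor of $\alpha$ for the filter's smoothing, so that the $H^2$ estimate decouples from itself and is controlled entirely by the energy-bounded $H^1$ norm.
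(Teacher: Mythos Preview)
Your proof is correct and follows essentially the same route as the paper: differentiate the Oseen representation, use~(\ref{eq:est_der_sec}) to bound $\|\overline{\uv\otimes\uv}\|_{2,2}$ by $C\alpha^{-1}\|\uv\otimes\uv\|_{0,2}\le C\alpha^{-1}\|\uv\|_{0,4}^2\le C\alpha^{-1}\|\uv\|_{1,2}^2$, and close with the uniform $H^1$ bound supplied by the energy balance. Your closing remark on why the naive use of~(\ref{eq:H4_regularity}) would lead to a Riccati-type obstruction is exactly the point; the paper does not spell this out but makes the same choice of~(\ref{eq:est_der_sec}) for the same reason.
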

\begin{proof}
  We must prove that $\|\uv (t, \cdot) \|_{2,2}$ remains bounded on the time interval
  $[0,T[$.  We deduce from the integral representation formula and from the
  inequality~(\ref{eq:calcul_integral}) that
 \begin{equation}
   \label{eq:borne_H_2}
   \| D^m \uv(t, \cdot) \|_{0, 2} \le {C
     \over \alpha} \| \uv_0 \|_{0,2} + \int_0^t { \| D^m (\overline{\uv \otimes \uv})
     (t', \cdot) \|_{0,2} \over \sqrt{\nu (t-t') }}\,dt'\quad \text{for }
   m=0,1,2,  
 \end{equation} 
 and, as soon as $m \le 2$, we get by~(\ref{eq:est_der_sec}), with the Cauchy-Schwarz and
 Sobolev inequalities, that
  \begin{equation*} 
    \begin{array}{ll}
      \| D^m (\overline{\uv \otimes \uv}) (t', \cdot)
      \|_{0,2} & \zoom \le C \alpha^{-1} \| (\uv \otimes \uv) (t', \cdot) \|_{0, 2} 
      \\
      & \zoom
      \le C \alpha^{-1}\| \uv (t', \cdot) \|_{0,4}^2 
      \\ 
      & \zoom \le C \alpha^{-1}\| \uv (t',
      \cdot) \|_{1,2}^2 
      \\
      &\le C \alpha^{-3} E_\alpha (t),
 \end{array} 
\end{equation*}
which leads to, by the energy balance,
 \begin{equation*} 
\| D^m (\overline{\uv \otimes \uv}) (t', \cdot) \|_{0,2} \le {C
\alpha^{-3}}E_{\alpha, 0},
\end{equation*} 
that we combine with~(\ref{eq:borne_H_2}) to get,
\begin{equation*} 
  \forall\,m\leq2,\qquad  \| D^m \uv(t, \cdot) \|_{0, 2} \le C \left ( { 1 \over \alpha}
    \|\uv_0\|_{0,2} + { 1 \over \alpha^3} E_{\alpha, 0} \sqrt { t \over \nu } \right ), 
\end{equation*} 
concluding the proof.
\end{proof} 
We note that the Duhamel principle applies, by Lemma~8 in~\cite{Ler1934}. Therefore, we
have for all $0 < \tau \le t < T$,
\begin{equation*} 
  \uv (t, \x) = (Q \star \uv(\tau, \cdot) ) (t, \x) + \int_{\tau}^{t} \inte
  \g T (t - t', \x- \yv ): (\overline{\uv\otimes \uv}) (t', \yv) \,d\yv dt'.
\end{equation*}
Therefore, from this principle, by induction and following the same proofs of
Lemma~\ref{lem:pression_regularity}, Lemma~\ref{lem:velocity_regularity}, and
Lemma~\ref{lem:regularity_vitesse}, and Lemma~\ref{eq:differentiation}, we can easily show
the following result.
 \begin{lemma}
   \label{lem:regularité-HM} 
   For all $0 < \tau_1 < \tau_2 < T$ and for $m > 2$, we have that $ (\uv, p) \in
   C([\tau_1, \tau_2]; H^m(\R^3)^3 \times H^m(\R^3))$ and there exists $C =
   C(m,\tau_1,T,\| \uv_0\|_{0, 2}, \nu, \alpha)$ such that 
   \begin{equation}
     \label{eq:estimate_Hm} 
     \forall \, t \in [\tau_1, \tau_2], \qquad \| \uv
     (t, \cdot) \|_{m,2}\le C(m, \tau_1, T, \| \uv_0\|_{0, 2}, \nu, \alpha). 
   \end{equation}
 \end{lemma}
 We stress that the constant in~(\ref{eq:estimate_Hm}) depends on $T$ as
 $\mathcal{O}(\sqrt T)$, and remains finite, whatever the value of $T<\infty$ is
 considered. We also notice that we cannot let $\tau_1$ to go to zero when $m>2$, which is
 due to the Helmholtz filter that only regularizes the initial data up to $H^2(\R^{3})$
 and not better, since $\uv_0 \in L^2(\R^3)^3$, which is the appropriate regularity
 assumption about the initial value in the framework of the Navier-Stokes equations.  This
 is one of the main differences with respect to the filtering made by convolution with
 smooth functions.
\begin{lemma}
  \label{lem:continuation} 
  Let us assume $T>0$. Any regular solution on $[0, T[$ can be extended up to $t=T$.
\end{lemma}
\begin{proof} 
  Let $0 < t_1 < t_2 <T$. We write
\begin{equation*} 
  \uv (t_2, \x) = (Q \star \uv(t_1, \cdot) )
  (t_2, \x) + \int_{t_1}^{t_2} \inte \g T (t_2 - t', \x- \yv ): (\overline{\uv\otimes \uv})
  (t', \yv)\,d \yv dt'. 
\end{equation*} 
Therefore, by the same calculation as above, using the again the energy balance, we get
\begin{equation} 
  \label{eq:cauchy_criterium} 
  \| \uv(t_1, \cdot) - \uv (t_2, \cdot) \|_{2,
    2}\le \| (Q \star \uv (t_1, \cdot))(t_2, \cdot) - \uv(t_1, \cdot) \|_{2,2} + {C \over
    \alpha^3} E_{\alpha, 0} \sqrt {t_2-t_1 \over \nu}.  
\end{equation} 
Standard results about the heat equation yield the estimate
\begin{equation*} 
  \| (Q \star \uv (t_1, \cdot))(t_2, \cdot) - \uv(t_1, \cdot) \|_{2,2} \le
  {\nu (t_2-t_1 ) \over 2} \| \uv(t_1, \cdot) \|_{3, 2}.  
\end{equation*} 
Therefore, by Lemma~\ref{lem:regularité-HM}, which provides a uniform bound in $t_1$ about
$\| \uv(t_1, \cdot) \|_{3, 2}$ for $t_1 \ge \tau >0$ (the time $\tau <T$ being fixed),
combined with~(\ref{eq:cauchy_criterium}), we see that $\uv (t, \cdot)$ satisfies a
uniform Cauchy criterion in the Banach space $H^2(\R^3)^3$ over $[\tau, T[$, in particular
when both $t_1 , t_2 \to T$. Therefore, $\uv(t, \cdot)$ admits a limit when $t \to T$,
concluding the proof.
\end{proof}
\subsection{Iterative procedure}
\label{sec:Iterations} 
Let $\tau >0$ be a given time, which will be fixed later. We equip the Banach space
$C([0,\tau]; H^2(\R^3)^3)$ with its natural uniform norm 
\begin{equation*} 
  \|\vv (t, \cdot) \|_{\tau; 2,2} := \sup_{t \in [0, \tau]} \| \vv(t,\cdot) \|_{2,2}.
\end{equation*} 
We consider in $C([0, \tau]; H^2(\R^3)^3)$ the sequence $(\uv^{(n)})_{n \in \N}$ defined
by
\begin{equation} 
  \label{eq:ite_def} 
  \left \{ 
    \begin{array}{l} 
      \zoom \uv^{(0)}(t, \x) =
      \overline {\uv_0}(\x), \phantom{\int_0^1} 
      \\ 
      \uv^{(n)} (t, \x) = (Q \, \star \, \overline{
        \uv_0}) (t, \x) + 
      \\ 
      \zoom \hskip 3,5cm \int_0^t \inte \g {\bf T} (t-t', \x - \yv) :
      \overline{\uv^{(n-1)} \otimes \uv^{(n-1)} } (t', \yv)\, d \yv dt'.  
    \end{array}
  \right. 
\end{equation} 
By the same reasoning of the previous section, it is easily checked by induction that each
$\uv^{(n)}$ lies indeed in $C([0, \tau]; H^2(\R^3)^3)$ (but in fact and much better
space). In the following, $C$ denotes a constant that only depends on Sobolev constants
and the Oseen tensor. Let $t \mapsto E^{(n)}_\alpha(t)$ be the function defined by
\begin{equation*}
  E^{(n)}_\alpha(t) := \| \uv^{(n)} (t, \cdot) \|_{0,2}^2+\alpha^2 \| \g \uv^{(n)}
  (t,\cdot) \|_{0,2}^2.   
\end{equation*} 
For technical conveniences, we will assume in the following that $\alpha \le
1$\footnote{We can take any other upper bound for $\alpha$, which only yields technical
  complications}.

The goal is to prove that the sequence $(\uv^{(n)})_{n \in \N}$ satisfies a contraction
  property on a time interval $[0, \tau_{Lip}]$. We will therefore conclude to
the convergence of the sequence $(\uv^{(n)})_{n \in \N}$ by the Picard theorem.  To do so,
we need to estimate $E_\alpha^{(n)}(t)$, at least over a small time interval $[0, \tau[$
to begin. This is the purpose of the next lemma.
\begin{lemma}
  Let us define
  \begin{equation}
    \label{eq:tau_m}
    \tau_{max}(\sigma):= { \nu \alpha^6 \over 4C^2 \sigma}\quad\text{for }\sigma>0. 
  \end{equation} 
  Then
  \begin{equation} 
    \label{eq:bound_E_alpha} 
    \forall \, n \in \N \text{ and } \forall \, t \in
    [0, \tau_{max}(E_{\alpha, 0})], \qquad E^{(n)}_\alpha(t) \le 8 E_{\alpha,
      0}. 
  \end{equation}
\end{lemma}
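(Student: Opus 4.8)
The plan is to argue by induction on $n$, in the standard fashion of a Picard \emph{a priori} bound, propagating the statement $\sup_{t\in[0,\tau_{max}(E_{\alpha,0})]}E^{(n)}_\alpha(t)\le 8E_{\alpha,0}$. The base case $n=0$ is immediate: since $\uv^{(0)}(t,\cdot)=\overline{\uv_0}$ by~\eqref{eq:ite_def}, one has $E^{(0)}_\alpha(t)=E_{\alpha,0}\le 8E_{\alpha,0}$. For the inductive step I would assume the bound at level $n-1$ and estimate the right-hand side of the iteration~\eqref{eq:ite_def}, splitting $\uv^{(n)}$ into the heat part $L(t):=Q\star\overline{\uv_0}$ and the Duhamel (nonlinear) part $N^{(n)}(t)$.

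It is convenient to measure everything in the weighted norm $\|\vv\|_*^2:=\|\vv\|_{0,2}^2+\alpha^2\|\g\vv\|_{0,2}^2$, so that $E^{(n)}_\alpha(t)=\|\uv^{(n)}(t,\cdot)\|_*^2$ and, by the triangle inequality, $\sqrt{E^{(n)}_\alpha(t)}\le\|L(t)\|_*+\|N^{(n)}(t)\|_*$. For the heat part, the Gaussian $Q(t,\cdot)$ has unit $L^1$ norm, so Young's inequality gives $\|Q\star f\|_{0,2}\le\|f\|_{0,2}$, and $\g$ commutes with the convolution; hence $\|L(t)\|_*\le\|\overline{\uv_0}\|_*=\sqrt{E_{\alpha,0}}$ uniformly in $t$. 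This term carries no growth, and the $\alpha$-weighting built into $\|\cdot\|_*$ is used in an essential way here, since it is exactly what keeps the heat contribution at the level $E_{\alpha,0}$ rather than $\alpha^{-2}E_{\alpha,0}$.

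The core of the argument is the estimate of $\|N^{(n)}(t)\|_*$. Using~\eqref{eq:calcul_integral} together with Young's convolution inequality (after differentiating under the integral sign and, for the gradient component, moving one derivative onto the filtered tensor by integration by parts in the space variable), one reduces to bounding $\|\overline{\uv^{(n-1)}\otimes\uv^{(n-1)}}(t',\cdot)\|_{0,2}$ and its first derivative. Here I would invoke the Helmholtz smoothing bound~\eqref{eq:est_der_sec} (together with~\eqref{eq:estimates_1}--\eqref{eq:estimates_2}) to pass from the filtered product to $\|\uv^{(n-1)}\otimes\uv^{(n-1)}\|_{0,2}\le\|\uv^{(n-1)}\|_{0,4}^2$, then the Sobolev embedding $H^1(\R^3)\hookrightarrow L^4(\R^3)$ and the inequality $\|\uv^{(n-1)}\|_{1,2}^2\le C\alpha^{-2}E^{(n-1)}_\alpha$ (valid because $\alpha\le 1$) to convert the $L^4$ norm into $E^{(n-1)}_\alpha$. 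Collecting the powers of $\alpha$ and performing the time integral $\int_0^t(\nu(t-t'))^{-1/2}\,dt'=2\sqrt{t/\nu}$ yields a bound of the shape
\begin{equation*}
  \|N^{(n)}(t)\|_*\le \frac{C}{\alpha^{3}}\sqrt{\frac{t}{\nu}}\,\sup_{t'\in[0,t]}E^{(n-1)}_\alpha(t').
\end{equation*}

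The induction then closes by construction. Inserting the hypothesis $\sup E^{(n-1)}_\alpha\le 8E_{\alpha,0}$ and restricting to $t\le\tau_{max}(E_{\alpha,0})$, the definition~\eqref{eq:tau_m} forces $\alpha^{-3}\sqrt{t/\nu}\,E_{\alpha,0}$ to be controlled by a fixed multiple of $\sqrt{E_{\alpha,0}}$, with the factor $4C^2$ and the exponent $\alpha^{6}=(\alpha^{3})^{2}$ precisely calibrated so that $\sqrt{E^{(n)}_\alpha(t)}\le \sqrt{E_{\alpha,0}}+(\text{a controlled multiple})\sqrt{E_{\alpha,0}}$ remains below $\sqrt{8E_{\alpha,0}}$, which is~\eqref{eq:bound_E_alpha}. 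The main obstacle is exactly this nonlinear estimate: one must track the correct power of $\alpha$ (the quadratic term costs $\alpha^{-2}$ through Sobolev and the filtered gradient costs a further $\alpha^{-1}$, producing the $\alpha^{-3}$ that dictates the $\alpha^{6}$ in $\tau_{max}$) while retaining only the single factor $\sqrt{t/\nu}$ coming from the integrable singularity of $\g{\bf T}$. It is the clash between the \emph{quadratic} dependence on $E^{(n-1)}_\alpha$ and the single power of $\sqrt{t}$ that makes the smallness of $\tau_{max}$ indispensable, and that fixes its scaling.
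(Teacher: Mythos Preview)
Your proposal is correct and follows essentially the same route as the paper: induction on $n$, split the iterate into the heat part and the Duhamel part, control the latter through the Oseen kernel bound~\eqref{eq:calcul_integral} together with Helmholtz filter estimates and Sobolev, obtain an inequality of the form $\sqrt{E^{(n)}_\alpha(t)}\le C_0\sqrt{E_{\alpha,0}}+C\alpha^{-3}\sqrt{t/\nu}\sup_{[0,t]}E^{(n-1)}_\alpha$, and close by choosing $t\le\tau_{max}(E_{\alpha,0})$. The only notable difference is that for the $L^2$ bound of $\overline{\uv^{(n-1)}\otimes\uv^{(n-1)}}$ the paper goes through the $L^1\to L^2$ smoothing~\eqref{eq:norm2_1} and Cauchy--Schwarz (yielding $C\alpha^{-3/2}\|\uv^{(n-1)}\|_{0,2}^2$), whereas you use~\eqref{eq:estimates_1} and the Sobolev embedding $H^1\hookrightarrow L^4$; both routes land on the same $\alpha^{-3}$ scaling once $\alpha\le 1$ is used.
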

\begin{proof}
  We argue by induction. We have for all $t \in [0, \infty[$,
  $E^{(0)}_\alpha(t)=E_{\alpha, 0}\le 8 E_{\alpha, 0}$. Let $n \ge 0$ be given.  On one
  hand we have
\begin{equation} 
  \label{eq:ite1} 
  \| \uv^{(n)} (t, \cdot)\|_{0, 2}\le \|\ID \|_{0, 2}+
  \int_0^t { \|(\overline { \uv^{(n-1)} \otimes \uv^{(n-1)} }) (t', \cdot) \|_{0,2} \over
    \sqrt {\nu(t-t')} }\,dt', 
\end{equation} 
and, by~(\ref{eq:norm2_1}), we also have for $t' \in [0, t]$
\begin{equation} 
  \label{eq:ite2}
  \begin{array}{ll} 
    \|(\overline { \uv^{(n-1)} \otimes
      \uv^{(n-1)} }) (t', \cdot) \|_{0,2} & \le C \alpha^{-{3 \over 2}} \| (\uv^{(n-1)} \otimes
    \uv^{(n-1)}) (t', \cdot) \|_{0,1} 
    \\
    & \le \zoom C \alpha^{-{3 \over 2}} \|
    \uv^{(n-1)}(t', \cdot) \|^2_{0, 2}.
  \end{array}
\end{equation} 
On the other hand, we have
\begin{equation} 
  \label{eq:ite3}
  \| \g \uv^{(n)} (t, \cdot)\|_{0, 2}\le \|\g \ID \|_{0,
    2}+ \int_0^t { \|\g (\overline { \uv^{(n-1)} \otimes \uv^{(n-1)} }) (t', \cdot) \|_{0,2}
    \over \sqrt {\nu(t-t')} }\,dt', 
\end{equation} 
and consequently, by using Sobolev inequality,
\begin{equation} 
  \label{eq:ite4} 
  \begin{array}{ll} 
    \|\g (\overline { \uv^{(n-1)} \otimes
      \uv^{(n-1)} }) (t', \cdot) \|_{0,2} & \le C \alpha^{-{1}} \| (\uv^{(n-1)} \otimes
    \uv^{(n-1)}) (t', \cdot) \|_{0,2} 
    \\ 
    & \le \zoom C \alpha^{-1} \| \uv^{(n-1)} \|^2_{0,
      4} \phantom{\int_0^1}
    \\
    & \le \zoom C \alpha^{-1} \left [ \| \uv^{(n-1)} \|^2_{0, 2}
      + \| \g \uv^{(n-1)} \|^2_{0, 2} \right ].
\end{array}
\end{equation} 
We then combine~(\ref{eq:ite1}),~(\ref{eq:ite2}),~(\ref{eq:ite3}), and~(\ref{eq:ite4})
with elementary algebraic inequalities, and we get
\begin{equation} 
  \label{eq:ite5}
  \sqrt {E_\alpha^{(n)} (t) } \le \sqrt {2 E_{\alpha, 0}} +
  {C \over \alpha^3}\int_0^t {E_\alpha^{(n-1)} (t') \over \sqrt { \nu (t-t')} }\,dt'= F(
  E_\alpha^{(n-1)})(t).
\end{equation} 
Then, we deduce from~(\ref{eq:ite5}) by induction, that the inequality ``$E_\alpha^{(n-1)}
(t) \le 8 E_{\alpha, 0}$'' yields ``$E_\alpha^{(n)} (t) \le 8 E_{\alpha, 0}$'' for all $t
\in [0, \tau]$, and for any time $\tau>0$ such that
\begin{equation*} 
  F( 8 E_{\alpha, 0} )(\tau) \le 2 \sqrt 2 \sqrt{ E_{\alpha, 0} }.
\end{equation*} 
After easy calculations (where we use $\alpha \le 1$ to balance the terms of the form
$\alpha^{-s}$) we can explicitly prove the upper bound
\begin{equation*} 
  \tau \le {\nu \alpha^6 \over 4 C^2 E_{\alpha, 0}}= \tau_{max}(E_{\alpha,0}), 
\end{equation*} 
hence the result.
\end{proof} 
It is very important to notice that the function $\sigma \mapsto \tau_{max}(\sigma
)$ is
non-increasing. From this local time estimate of $E_\alpha^{(n)} (t)$, we can now prove
the following result.
\begin{lemma}
  \label{lem:contraction}
  There exists $\tau_{Lip} = \tau_{Lip}(E_{\alpha, 0}) \in \, ]0, \tau_{max}(E_{\alpha, 0})]$ such that
  \begin{equation*}
    \forall \, n \ge 1, \qquad \| \uv^{(n+1)} - \uv^{(n)}\|_{\tau_{Lip}; 2,2}
    \le {1 \over 2} \,\| \uv^{(n)} - \uv^{(n-1)}\|_{\tau_{Lip}; 2,2}. 
  \end{equation*} 
  Moreover, the function $\sigma \mapsto \tau_{Lip}(\sigma)$ is a non-increasing function.
\end{lemma}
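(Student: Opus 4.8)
The plan is to exploit the structure of the iteration~\eqref{eq:ite_def}: the two successive iterates share the \emph{same} linear (heat) part $(Q\star\overline{\uv_0})$, so subtracting~\eqref{eq:ite_def} at levels $n+1$ and $n$ cancels that term and leaves only the Oseen integral of the difference of the filtered quadratic terms. Writing $w^{(k)} := \uv^{(k)}-\uv^{(k-1)}$ and $G^{(n)} := \uv^{(n)}\otimes\uv^{(n)} - \uv^{(n-1)}\otimes\uv^{(n-1)}$, the first step is thus to record
\begin{equation*}
  w^{(n+1)}(t,\x) = \int_0^t \inte \g {\bf T}(t-t',\x-\yv) : \overline{G^{(n)}}(t',\yv)\,d\yv\,dt',
\end{equation*}
and then to linearize the quadratic difference by the elementary identity $G^{(n)} = \uv^{(n)}\otimes w^{(n)} + w^{(n)}\otimes\uv^{(n-1)}$, which isolates the increment $w^{(n)}$ as a single factor in each summand.

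The second step is the spatial estimate in $H^2$. For $m=0,1,2$ I would move all derivatives onto the filtered factor and apply Young's inequality for the convolution in $\x$ together with~\eqref{eq:calcul_integral}, obtaining
\begin{equation*}
  \| D^m w^{(n+1)}(t,\cdot)\|_{0,2} \le \int_0^t \frac{C}{\sqrt{\nu(t-t')}}\,\big\|\,\overline{G^{(n)}}(t',\cdot)\big\|_{2,2}\,dt'.
\end{equation*}
The two-derivative smoothing of the Helmholtz filter, in the form~\eqref{eq:est_der_sec}, then gives $\|\overline{G^{(n)}}\|_{2,2}\le C\alpha^{-1}\|G^{(n)}\|_{0,2}$, which is precisely what lets me avoid ever differentiating the quadratic term. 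For the remaining factor I would use $\|fg\|_{0,2}\le\|f\|_{0,4}\|g\|_{0,4}$ and the Sobolev embedding $H^1(\R^3)\hookrightarrow L^4(\R^3)$ to get $\|G^{(n)}(t',\cdot)\|_{0,2}\le C\big(\|\uv^{(n)}\|_{1,2}+\|\uv^{(n-1)}\|_{1,2}\big)\,\|w^{(n)}\|_{1,2}$.

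The crucial point, and the main obstacle, is that the iterates are only controlled uniformly in $H^1$, not in $H^2$: the preceding lemma furnishes only the energy bound~\eqref{eq:bound_E_alpha}, $E_\alpha^{(k)}(t)\le 8E_{\alpha,0}$, valid on $[0,\tau_{max}(E_{\alpha,0})]$. I would therefore be careful to use \emph{only} the $H^1$-norm of the ``base'' factors $\uv^{(n)},\uv^{(n-1)}$, deducing from $E_\alpha^{(k)}\le 8E_{\alpha,0}$ and $\alpha\le 1$ the uniform estimate $\|\uv^{(k)}(t',\cdot)\|_{1,2}\le C\alpha^{-1}\sqrt{E_{\alpha,0}}$, while keeping the full $H^2$-norm only on the increment, via $\|w^{(n)}\|_{1,2}\le\|w^{(n)}\|_{2,2}\le\|w^{(n)}\|_{\tau;2,2}$. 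Collecting these bounds, summing over $m=0,1,2$, and using $\int_0^t (\nu(t-t'))^{-1/2}\,dt' = 2\sqrt{t/\nu}$ yields, after taking the supremum over $t\in[0,\tau]$,
\begin{equation*}
  \| w^{(n+1)}\|_{\tau;2,2} \le \frac{C}{\alpha^2}\sqrt{\frac{E_{\alpha,0}\,\tau}{\nu}}\;\| w^{(n)}\|_{\tau;2,2}.
\end{equation*}

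Finally I would set
\begin{equation*}
  \tau_{Lip}(\sigma) := \min\Big\{\,\tau_{max}(\sigma),\ \frac{\nu\alpha^4}{16\,C^2\,\sigma}\,\Big\},
\end{equation*}
so that the second entry forces the prefactor above (with $\sigma=E_{\alpha,0}$) to be $\le 1/2$, while $\tau_{Lip}\le\tau_{max}(E_{\alpha,0})$ guarantees that~\eqref{eq:bound_E_alpha} holds for all the factors $\uv^{(n)},\uv^{(n-1)}$ entering the estimate; this gives the claimed contraction for every $n\ge1$. Since both entries of the minimum are of the form $c\,\nu\alpha^{p}/\sigma$, each strictly decreasing in $\sigma$, the map $\sigma\mapsto\tau_{Lip}(\sigma)$ is non-increasing, as required.
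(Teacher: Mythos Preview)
Your argument is correct and follows essentially the same route as the paper: subtract the iterates to cancel the heat part, use~\eqref{eq:calcul_integral} together with the Helmholtz gain~\eqref{eq:est_der_sec} to bound $\|\overline{G^{(n)}}\|_{2,2}$ by $C\alpha^{-1}\|G^{(n)}\|_{0,2}$, then control the bilinear remainder via $H^1\hookrightarrow L^4$ and the energy bound~\eqref{eq:bound_E_alpha}, arriving at the same choice of $\tau_{Lip}$ as in~\eqref{eq:tau_k}. Your presentation is in fact slightly more explicit (the decomposition $G^{(n)}=\uv^{(n)}\otimes w^{(n)}+w^{(n)}\otimes\uv^{(n-1)}$ and the observation that only the $H^1$-norm of the base iterates is needed), but the ideas coincide.
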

\begin{proof} Let $t \le \tau_{max}(E_{\alpha, 0})$, in order to
  apply~(\ref{eq:bound_E_alpha}). We deduce from~(\ref{eq:calcul_integral})
  and~(\ref{eq:ite_def}) that
\begin{equation} 
  \label{eq:ite_est_1} 
  \| (\uv^{(n+1)} - \uv^{(n)}) (t, \cdot) \|_{2,2}\le
  \int_0^t { \|(\overline { \uv^{(n)}\otimes \uv^{(n)} - \uv^{(n-1)}\otimes \uv^{(n-1)})
      (t', \cdot) }\|_{2, 2} \over \sqrt {\nu(t-t')} }\,dt'.
\end{equation} 
Repeating the same reasoning as above yields, for $t' \in [0, t]$,
\begin{equation} 
  \label{eq:ite_est_2} 
  \begin{array}{l} 
    \|(\overline { \uv^{(n)}\otimes
      \uv^{(n)} - \uv^{(n-1)}\otimes \uv^{(n-1)}) (t', \cdot) }\|_{2, 2} 
    \\
    \zoom \hskip 1cm\phantom {\int_0^1} \leq {C \over \alpha}\left [ \| \uv^{(n)}(t', \cdot) \|_{1, 2} + \|
      \uv^{(n-1)}(t', \cdot) \|_{1, 2} \right ] \| (\uv^{(n)} - \uv^{(n-1)}) (t', \cdot)
    \|_{1,2}.
 \end{array}
\end{equation} 
By~(\ref{eq:bound_E_alpha}), we have $\| \uv^{(p)}(t', \cdot) \|_{1, 2} \le \sqrt
2\alpha^{-1} \sqrt{E_\alpha(t')} \le 4 \alpha^{-1} \sqrt{E_{\alpha, 0}} $, in particular
for $p=n, n-1$. Therefore, combining~(\ref{eq:ite_est_1}) and~(\ref{eq:ite_est_2}) we
obtain that, for all $\tau \in \, ]0, \tau_{max}(E_{\alpha,
  0})]$, and for all $t \in [0, \tau]$,
\begin{equation*}
\| (\uv^{(n+1)} - \uv^{(n)}) (t, \cdot) \|_{2,2}\le {4C \over \alpha^2}\sqrt{\tau
  E_{\alpha, 0}\over \nu} \| \uv^{(n+1)} - \uv^{(n)} \|_{\tau; 2,2},
\end{equation*}
leading to
\begin{equation*}
 \| \uv^{(n+1)} - \uv^{(n)}  \|_{\tau; 2,2}\le {4C \over \alpha^2}\sqrt{\tau E_{\alpha,
     0}\over \nu}  \| \uv^{(n+1)} - \uv^{(n)} \|_{\tau; 2,2} .
\end{equation*}
The conclusion follows by taking
\begin{equation} 
  \label{eq:tau_k}
  \tau_{Lip} (E_{\alpha, 0}) =\inf \left\{ {\nu \alpha^4 \over
      16 C^2 E_{\alpha, 0}}, \tau_{max}(E_{\alpha, 0}) \right\}. 
\end{equation}
By~(\ref{eq:tau_m}) and~(\ref{eq:tau_k}), we see that the function $\sigma \mapsto
\tau_{Lip}(\sigma)$ is a non-increasing function.
\end{proof} 
We deduce from Lemma~\ref{lem:contraction} and Picard's Theorem,  that the sequence
$(\uv^{(n)})_{n \in \N}$ is convergent in the Banach space $C([0, \tau_{Lip}]; H^2(\R^3)^3)$
to some $\uv$. To conclude the proof of Theorem~\ref{thm:existence_solution_reg} it
remains to prove that this field is indeed a solution to the NSEB-$\alpha$ model. This is
the aim of the next section.
\subsection{Existence of a regular solution}
In this subsection we prove the final results leading to the existence of a regular
solution. This subsection is divided into three steps:
\begin{enumerate}[i)]
\item \label{it:Fin_step_1} we prove that $\uv$ satisfies the
  relation~(\ref{eq:pression_reg});
\item \label{it:Fin_step_2} we show that there exists $p \in C([0, \tau_{Lip}]; H^2(\R^3)^3)$,
  such that $(\uv, p)$ is a regular solution of the NSEB-$\alpha$ model on the time
  interval $[0, \tau_{Lip}[$;
\item \label{it:Fin_step_3} we show that $(\uv, p)$ can be extended for all $t \in [0,
\infty[$.
\end{enumerate}

\ref{it:Fin_step_1}) We first pass to the limit in the recursive
relation~(\ref{eq:ite_def}), that can be written in an abstract way as $\uv^{(n+1)} =
F(\uv^{(n)})$, and our aim is to prove $\uv = F(\uv)$. This will also prove the continuity
of the function $\vv \mapsto F(\vv)$ in the Banach space $C([0, \tau_{Lip}]; H^2(\R^3)^3)$.

\smallskip

Let $(t, \x) \in [0, \tau_{Lip}] \times \R^3$ be fixed, and for $(t', \yv) \in [0, t[ \times
\R^3$, we set:
\begin{equation*} 
  \left \{ 
    \begin{array}{lcl} \zoom \psi_n (t, \x , t', \yv) &= &\g T
      (t-t', \x- \yv) : \overline{\uv^{(n)} \otimes \uv^{(n)} } (t', \yv),
      \\ 
      \psi (t, \x , t',
      \yv) &=& \zoom \g T (t-t', \x- \yv) : \overline{\uv \otimes \uv } (t', \yv),
      \phantom{\inte \psi_n (t, \x ; t', \yv)} 
      \\ 
      \varphi_n(t,t', \x)& = & \zoom \inte \psi_n(t, \x ; t', \yv) \, d \yv, 
      \\ 
      \\
      \varphi (t,t', \x)& = & \zoom \inte \psi (t, \x ; t', \yv)\,d \yv.
    \end{array} 
  \right. 
\end{equation*} 
We must check that
\begin{equation} 
  \label{eq:leb_appl} 
  \lim_{n \to \infty}\int_0^t \varphi_n(t,t', \x)\,dt'= \int_0^t \varphi(t,t', \x) \,dt'. 
\end{equation} 
To do so we will apply Lebesgue dominated convergence Theorem twice.  From the previous
results we deduce that
\begin{equation*}
  \lim_{n \to \infty}\psi_n (t, \x ; t', \yv) = \psi (t, \x ; t', \yv) =
  \g T (t-t', \x- \yv) : \overline{\uv \otimes \uv } (t', \yv), 
\end{equation*} 
uniformly in $\yv \in \R^3$, for any given $t' \in [0, t[$. Moreover, as $(\uv^{(n)})_{n
  \in \N}$ is a sequence converging in $C([0, \tau_{Lip}]; H^2(\R^3)^3)$, it is bounded in the
same space. Consequently, by the results of Section~\ref{sec:helmholz_filter}, it is
easily checked that
\begin{equation*} 
  | \psi_n (t, \x ; t', \yv) |\le C \sup_{n \in \N}\|\uv^{(n)}
  \|_{\tau_{Lip}; 2,2}^2 |\g T (t-t', \x- \yv)|\in L^1_{\yv} (\R^3), 
\end{equation*}
by~(\ref{eq:calcul_integral}), since $t'<t$. Therefore, we have by Lebesgue's theorem,
\begin{equation*} 
  \lim_{t \to \infty} \varphi_n(t,t', \x) = \varphi (t,t',\x). 
\end{equation*} 
Similarly,
\begin{equation*}
  | \varphi_n(t,t', \x) | \le {C \over \sqrt { \nu (t-t') } } \in L^1([0,t]),
 \end{equation*} 
 hence~(\ref{eq:leb_appl}) holds by Lebesgue's theorem once again.  In conclusion, $\uv$
 satisfies the integral relation~(\ref{eq:pression_reg}) as claimed.  \medskip

\ref{it:Fin_step_2}) Let us now consider the unsteady (linear) Stokes problem:
\begin{equation} 
  \label{eq:Stokes_Bardina} 
  \left \{ 
    \begin{array}{ll}
      \p_t \vv - \nu
    \Delta \vv + \g p = - \div ( \overline {\uv \otimes \uv}) &\quad \hbox{in} \quad [0,
    \tau_{Lip}] \times \R^3, 
    \\
    \div \, \vv = 0 &\quad \hbox{in} \quad [0, \tau_{Lip}] \times \R^3, 
    \\
    \vv_{t=0} = \overline {\uv_0}&\quad \hbox{in} \quad \R^3.
  \end{array} 
\right.
\end{equation} 
This Stokes problem has a source term in $C([0, \tau_{Lip}]; H^3(\R^3)^3)$ and an initial data
in $H^2(\R^3)^3$. By standard results (see for instance Caffarelli, Kohn, and
Nirenberg~\cite{CKN1982}, Solonnikov~\cite{VS64}, and Temam~\cite{RT01}) we already know
the existence of a unique variational solution $(\vv, p)$ to the
problem~(\ref{eq:Stokes_Bardina}), obtained by the Galerkin method, and with (at least)
the regularity
\begin{equation*}
  \begin{array}{l}  
    \vv \in C([0, \tau_{Lip}]; H^2(\R^3)^3 ) \cap L^2([0, \tau_{Lip}]; H^4(\R^3)^3
    ), \quad \p_t \vv \in L^2([0, \tau_{Lip}]; H^2(\R^3)^3 ), 
    \\
    \zoom p \in L^2 ([0, \tau_{Lip}]; H^1(\R^3)^3 ). \phantom{\int_0^t} 
  \end{array}
\end{equation*}
 From this,
it is easily checked that $(\vv, p)$ is a strong solution to ~(\ref{eq:Stokes_Bardina}),
therefore a regular solution in the sense of Definition~\ref{def:regular}, which satisfies
by Lemma 8 in~\cite{Ler1934} the integral formulation
\begin{equation*}
\vv (t, \x) =  (Q \, \star \,   \overline{ \uv_0}) (t, \x)  + \int_0^t \inte \g  {\bf T}
(t-t', \x - \yv) :   \overline{\uv  \otimes \uv} (t', \yv)\, d \yv dt'.
\end{equation*}
Hence, $\uv = \vv$, and $(\uv, p)$ is indeed a regular solution to the NSEB-$\alpha$ model
over $[0, \tau_{Lip}]$.  \medskip

\ref{it:Fin_step_3}) It remains to check that the solution $\uv$ can be extended up to
$[0, \infty[$. We already know by Lemma~\ref{lem:continuation} that $\uv$ can be extended
to $t = \tau_{Lip}$. We also know by Corollary~\ref{cor:energy_balance} that this solution
satisfies the energy balance. Therefore, the function $t \mapsto E_{\alpha} (t)$ is non
increasing over $[0, \tau_{Lip}]$, and we have in particular
\begin{equation} 
  \label{eq:energy_consequence} 
  E_{\alpha, 1}= E_\alpha (\tau_{Lip}(E_{\alpha, 0})) \le E_{\alpha, 0}.
 \end{equation} 
 The construction carried out in Subsection~\ref{sec:Iterations} and
 step~\ref{it:Fin_step_1}) can be reproduced starting from the time $t= \tau_{Lip} (E_{\alpha,
   0})$ instead of $ t= 0$, and with initial data $\uv (\tau_{Lip} (E_{\alpha, 0}), \cdot)$
 instead of $\overline {\uv_0}$. We then get a regular solution to the NSEB-$\alpha$ model
 over the time interval $[\tau_{Lip} (E_{\alpha, 0}), \tau_{Lip} (E_{\alpha, 0})+ \tau_{Lip}
 (E_{\alpha, 1})]$. As the solution is left continuous at $t = \tau_{Lip} (E_{\alpha, 0})$ in
 $H^2(\R^3)^3$, and also right continuous at the same time, it is continuous, and
 therefore we constructed
\begin{equation*}
  \uv \in C([0,  \tau_{Lip} (E_{\alpha, 0})+ \tau_{Lip} (E_{\alpha, 1})]; H^3(\R^3)^3).
\end{equation*}
We first observe that this $\uv$ is a weak solution to the NSEB-$\alpha$, but then we can
easily check from the equation that the gluing at $t = \tau_{Lip} (E_{\alpha, 0})$ is of class
$C^1$ in time, so that we get a regular solution over $[0, \tau_{Lip} (E_{\alpha, 0})+
\tau_{Lip}(E_{\alpha, 1})]$. The main point that allows to iterate this process, is that the
functions $\sigma \mapsto \tau_{Lip} (\sigma)$ and $t \mapsto E_\alpha(t)$ are both
non-increasing. In particular we get by~(\ref{eq:energy_consequence}),
\begin{equation*} 
  \tau_{Lip} (E_{\alpha, 0}) \le \tau_{Lip} (E_{\alpha, 1}). 
\end{equation*} 
This suggests to build the sequence $(T_n)_{n \in \N}$, by setting
\begin{equation*}
  T_0 =  \tau_{Lip} (E_{\alpha, 0}),
\end{equation*}
and assuming that we have constructed a regular solution of the NSEB-$\alpha$ model over
$[0, T_n]$. Then we extend the solution as above, starting from $t = T_n$ and $\uv(T_n,
\cdot)$ over the time interval $[T_n, T_n + \tau_{Lip}(E_{\alpha, n})]$, where $E_{\alpha, n
}= E_\alpha(T_n)$. Therefore,
\begin{equation*}
T_{n+1}= T_n + \tau_{Lip}(E_{\alpha, n}).
\end{equation*}
 Since 
 \begin{equation*}
 \tau_{Lip}(E_{\alpha, n}) \ge \tau_{Lip}(E_{\alpha, n-1}) \ge \cdots \ge \tau_{Lip}(E_{\alpha, 0}),
 \end{equation*}
we have
\begin{equation*}
T_n \ge n \tau_{Lip}(E_{\alpha, 0}),
\end{equation*}
hence
\begin{equation*}
 \lim_{n \to \infty}T_n = \infty,
\end{equation*}
and the solution is indeed constructed for all $t \in [0, \infty[$.
\section*{Acknowledgments} 
The research that led to the present paper was partially supported by a grant of the group
GNAMPA of INdAM 
\bibliographystyle{plain} 
\bibliography{Biblio}
\end{document}